\newcommand{\Z}{\mathbb{Z}}
\newcommand{\vast}{\bBigg@{4}}
\newcommand{\Vast}{\bBigg@{5}}
\newcommand{\pres}[2]{\langle {#1}\ |\ {#2} \rangle}
\newcommand{\onetwopres}[3]{\Bigg \langle {#1}\ \Bigg |\
\begin{array}{l}
{#2}\\{#3}
\end{array}\Bigg \rangle}
\newcommand{\onethreepres}[4]{\Bigg \langle {#1}\ \Bigg |\
\begin{array}{l}
{#2}\\{#3}\\{#4}
\end{array}\Bigg \rangle}
\newcommand{\threethreepres}[6]{\Bigg \langle \begin{array}{l}
{#1}\\{#2}\\{#3}
\end{array}
\ \Bigg |\
\begin{array}{l}
{#4}\\{#5}\\{#6}
\end{array}
\Bigg \rangle}
\newcommand{\twothreepres}[5]{\Bigg \langle \begin{array}{l}
{#1}\\{#2}
\end{array}
\ \Bigg |\
\begin{array}{l}
{#3}\\{#4}\\{#5}
\end{array}
\Bigg \rangle}
\newtheorem{theorem}{Theorem}
\newtheorem{lemma}[theorem]{Lemma}
\newtheorem{corollary}[theorem]{Corollary}
\newtheorem{remark}[theorem]{Remark}
\newtheorem{conjecture}[theorem]{Conjecture}
\newtheorem{maintheorem}{Theorem}
\newtheorem{maincorollary}[maintheorem]{Corollary}
\begin{document}
\title{Generalized Fibonacci groups $H(r,n,s)$ that are connected Labelled Oriented Graph groups}%
\author{Gerald Williams}

\maketitle

\begin{abstract}
The class of connected LOG (Labelled Oriented Graph) groups coincides with the class of fundamental groups of complements of closed, orientable 2-manifolds embedded in $S^4$, and so contains all knot groups. We investigate when Campbell and Robertson's generalized Fibonacci groups $H(r,n,s)$ are connected LOG groups. In doing so, we use the theory of circulant matrices to calculate the Betti numbers of their abelianizations. We give an almost complete classification of the groups $H(r,n,s)$ that are connected LOG groups. All torus knot groups and the infinite cyclic group arise and we conjecture that these are the only possibilities. As a corollary we show that $H(r,n,s)$ is a 2-generator knot group if and only if it is a torus knot group.
\end{abstract}

\section{Introduction}\label{sec:intro}

A \em Labelled Oriented Graph (LOG) \em consists of a finite graph (possibly with loops and multiple
edges) with vertex set $V$ and edge set $E$ together with three maps $\iota,\tau,\lambda: E \rightarrow V$ called the \em initial
vertex map, terminal vertex map, \em and \em labelling map, \em respectively. The LOG determines a corresponding
\em LOG presentation\em
\[ \pres{V}{\tau(e)^{-1}\lambda(e)^{-1}\iota(e)\lambda(e)\ (e\in E)}.\]
A group with a LOG presentation is called a \em LOG group\em~\cite{Howie85}. When the underlying graph is connected we have a \em connected LOG, \em a \em connected LOG presentation\em, and a \em connected LOG group\em. A \em $k$-knot group \em ($k\geq 0$) is the fundamental group of the complement of an $k$-sphere $S^k$ in $S^{k+2}$; we refer to a $1$-knot group as a \em knot group\em. The Wirtinger presentation of a knot group is a connected LOG presentation and so all knot groups are connected LOG groups; in particular, the infinite cyclic group is a connected LOG group. Further examples of LOG groups include right angled Artin groups and braid groups. Clearly the abelianisation of a LOG group is torsion-free, and the abelianisation of a connected LOG groups is the infinite cyclic group.

As pointed out in~\cite{GilbertHowie95},\cite{Gilbert17}, by~\cite{Simon80} a group is a connected LOG group if and only if it is the fundamental group of the complement of a closed, orientable 2-manifold embedded in $S^4$. It follows that every $k$-knot group is a connected LOG group~(\cite{AcunaGordonSimon10}). A particular instance of~\cite[Theorem~1.1]{AcunaGordonSimon10} is that, given a finite presentation of a group, there is no algorithm that can decide if that group is a connected LOG group. Another is that, given a finite presentation of a group, there is no algorithm that can decide if that group is a knot group (see also~\cite[Theorem~9.2.1]{Neuwirth},\cite{Stallings62}).

A \em cyclically presented group \em is a group defined by a presentation of the form
\begin{alignat*}{1}
G_n(w)=\pres{x_0,\ldots ,x_{n-1}}{w(x_i,x_{i+1},\ldots ,x_{i+n-1})\ (0\leq i<n)}%\label{eq:cycpres}
\end{alignat*}
where $w=w(x_0,x_1,\ldots ,x_{n-1})$ is some fixed element of the free group $F(x_0,\ldots ,x_{n-1})$. Connections between HNN extensions of cyclically presented groups and LOG groups have been investigated in~\cite{GilbertHowie95},\cite{SzczepanskiVesnin01},\cite{HowieWilliams12}. Asphericity of certain cyclic presentations that are (connected) \em word labelled oriented graph (WLOG) presentations \em is established in~\cite[Section~3]{HarlanderRosebrock15}. In this paper we investigate a particular family of cyclically presented groups and aim to classify when they are connected LOG groups or when they are knot groups. Namely, we investigate the \em generalized Fibonacci groups \em
\begin{alignat*}{1}
H(r,n,s)=\pres{x_0,\ldots,x_{n-1}}{\prod_{j=0}^{r-1} x_{i+j}=\prod_{j=0}^{s-1} x_{i+j+r}\ (0\leq i<n)}\label{eq:Hrns}
\end{alignat*}
where $r,s\geq 1$, $n\geq 2$, and subscripts are taken mod~$n$, that were introduced in~\cite{CampbellRobertson75}. Setting $r=2,s=1$ we get the \em Fibonacci groups \em $F(2,n)$ introduced in~\cite{Conway65}.

These groups have been considered from algebraic and topological perspectives. Finite groups $H(r,n,s)$ have been obtained in~\cite{CampbellRobertson75},\cite{Brunner77},\cite[Corollary~E]{BogleyWilliams16},\cite[Corollary~11]{BogleyWilliamsCoherence}; conditions under which $H(r,n,s)$ is infinite are given in~\cite{CampbellRobertson75},\cite{CampbellThomas86}; and conditions under which $H(r,n,s)$ is large, SQ-universal, or contains a non-abelian free subgroup can be extracted from~\cite{Williams12}. Asphericity of the presentations $H(r,n,s)$ is considered in~\cite[Theorem~3]{Prischcepov95}. A class of groups $H(r,n,s)$ that are fundamental groups of closed 3-manifolds was obtained in~\cite[Proposition~3]{SV00}. In the opposite direction conditions under which $H(r,n,s)$ is not the fundamental group of a hyperbolic 3-dimensional orbifold of finite volume are given in~\cite[Corollary~3.2]{CRS03}. Corollary~5.5 of~\cite{CRS03} gives that, for $r\neq s$, the natural HNN extension of the group $H(r,n,s)$ is a 3-knot group if and only if $|r-s|=1$. Note that by inverting the relators, replacing each generator by its inverse, and negating the subscripts we have that $H(r,n,s)\cong H(s,n,r)$.

Our main result is the following (recall that a group $G$ is \em perfect \em if $G^\mathrm{ab}=1$).

\begin{maintheorem}\label{thm:connLOGclassification}
Let $r,s\geq 1$, $n\geq 2$. If  $H(r,n,s)$ is a connected LOG group then one of the following holds:
\begin{itemize}
  \item[(a)] $r=s$ and $(r,n)=1$ in which case $H(r,n,s)\cong \pres{a,b}{a^n=b^r}$, the fundamental group of the complement of the $(r,n)$-torus knot in~$S^3$;
  \item[(b)] $(r,n,s)=2$, $|r-s|=2$ and either $r\equiv0$~mod~$n$ or $s\equiv0$~mod~$n$, in which case $H(r,n,s)\cong \Z$;%, the fundamental group of the complement of the unknot;
  \item[(c)] $(r,n,s)=2$, $|r-s|=2$, $\{r,s\}\neq \{4,2\}$, $(n,r+s)=2$, $r\not \equiv 0$~mod~$n$, $s\not \equiv0$~mod~$n$, and the group $H(r/2,n/2,s/2)$ is perfect.
  %\end{itemize}
\end{itemize}
\end{maintheorem}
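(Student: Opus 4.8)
The plan is to use the fact that a connected LOG group has abelianisation $\Z$, and to detect this through circulant matrices. Let $f(t)=\sum_{j=0}^{r-1}t^{j}-\sum_{j=0}^{s-1}t^{r+j}\in\Z[t]$ be the polynomial of exponent sums of the defining relator, so that $H(r,n,s)^{\mathrm{ab}}\cong \Z^{n}/M\Z^{n}$, where $M$ is the circulant matrix with representer $f(t)$ reduced mod $t^{n}-1$. The eigenvalues of $M$ are $f(\omega^{j})$ ($0\le j<n$, $\omega=e^{2\pi i/n}$), so the first Betti number is $\beta=\#\{\,j : f(\omega^{j})=0\,\}$, the number of $n$th roots of unity annihilated by $f$, while the torsion of the abelianisation is governed by the product of the nonzero eigenvalues. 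As $H(r,n,s)^{\mathrm{ab}}$ must be $\Z$, the entire argument reduces to pinning down when $\beta=1$ and the torsion is trivial.

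First I would compute $\beta$. Writing $g(t):=(t-1)f(t)=2t^{r}-t^{r+s}-1$, a root of unity $\zeta\ne 1$ satisfies $f(\zeta)=0$ iff $2\zeta^{r}=\zeta^{r+s}+1$; taking moduli, the right-hand side has absolute value at most $2$ with equality only if $\zeta^{r+s}=1$, which then forces $\zeta^{r}=\zeta^{s}=1$, and conversely every such $\zeta$ is a root. Hence the roots of $f$ on the unit circle are exactly the common roots of $t^{r}-1$ and $t^{s}-1$, together with $t=1$ precisely when $f(1)=r-s=0$. Counting these among the $n$th roots of unity gives $\beta=(r,n)$ if $r=s$ and $\beta=(r,n,s)-1$ if $r\ne s$. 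Imposing $\beta=1$ therefore leaves exactly two possibilities: either $r=s$ with $(r,n)=1$, or $r\ne s$ with $(r,n,s)=2$ (so $r,n,s$ are all even).

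For the case $r=s$ I would identify the group explicitly. Setting $y_{i}=x_{i}x_{i+1}\cdots x_{i+r-1}$, the relators become $y_{i}=y_{i+r}$, and since $(r,n)=1$ the shift $i\mapsto i+r$ is transitive on $\Z/n$, so all $y_{i}$ equal a single element $a$; the relations $y_{i}=y_{i+1}$ then drive a telescoping sequence of Tietze transformations collapsing the presentation onto $\pres{a,b}{a^{n}=b^{r}}$, the $(r,n)$-torus knot group. This yields (a), consistently with $\beta=1$.

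The substance is the case $r\ne s$, $(r,n,s)=2$, where $\beta=1$ is automatic and everything rests on forcing the torsion to vanish. Here I would evaluate the torsion order as the product of the nonzero eigenvalues (a resultant of $f$ with $(t^{n}-1)/(t+1)$), corrected for the rank-one kernel at the root $t=-1$. The factor $f(1)=r-s$ always survives, and its triviality is what produces the condition $|r-s|=2$. I would then split according to whether $r$ or $s$ is $\equiv 0$ mod $n$: in that degenerate case one relator wraps the cycle, each product becomes a proper power, and after abelianising one finds $H(r,n,s)^{\mathrm{ab}}\cong\Z\oplus\Z/\tfrac{|r-s|}{2}$ with the group collapsing to $\Z$ exactly when $|r-s|=2$, giving (b); in the remaining case, after extracting the contributions of the cyclotomic factors $\Phi_{1}$ and $\Phi_{2}$ one matches the surviving factors of the resultant to the order of $H(r/2,n/2,s/2)^{\mathrm{ab}}$, so that trivial torsion forces $(n,r+s)=2$ and $|r-s|=2$ together with perfectness of $H(r/2,n/2,s/2)$, the sporadic $\{r,s\}=\{4,2\}$ being excluded separately, giving (c). I expect this final torsion computation — controlling the indeterminate behaviour of the resultant at the two roots $t=\pm 1$ of $g$, and identifying the leftover cyclotomic factors with the half-size group — to be the main obstacle.
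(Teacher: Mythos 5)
Your first two steps track the paper closely: the Betti-number computation is exactly the paper's Theorem~\ref{thm:betti} (the paper uses $\beta=\mathrm{deg}(\mathrm{gcd}(f(t),t^n-1))$ and shows a nontrivial common root forces $\lambda^r=\lambda^s=\lambda^n=1$ by a conjugate-multiplication trick; your triangle-inequality argument for $2\zeta^r=\zeta^{r+s}+1$ is a valid, slightly slicker substitute), and the dichotomy $r=s,(r,n)=1$ versus $r\neq s,(r,n,s)=2$ is the same. Case (a) is the paper's Lemma~\ref{lem:(r,n)=1knot}; note though that your ``telescoping Tietze'' sentence conceals the actual work, which in the paper requires B\'ezout coefficients $\gamma r-\delta n=1$ to manufacture the second generator $b$ and to reduce to the single relation $a^n=b^r$. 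Case (b) agrees with the paper's Lemma~\ref{lem:Zgroup}.

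The genuine gap is in the heart of the matter, the case $r\neq s$, $(r,n,s)=2$ with $r,s\not\equiv0$ mod $n$, where you propose an exact torsion computation that you neither state precisely nor carry out. The paper never computes torsion: it exploits only $d(H^{\mathrm{ab}})=1$ via three explicit quotients --- a map onto a free product of copies of $\Z_2$ giving $(n,r+s)\leq 2$ (Lemma~\ref{lem:mapstoZ2^t}); the quotient by $x_{2j}=x_0$, $x_{2j+1}=x_1$ giving a surjection onto $\Z_{|r-s|/2}\oplus\Z$ and hence $|r-s|=2$ (Lemma~\ref{lem:r-s=2easy}); and a rewriting in terms of $y_j=x_{2j}x_{2j+1}$, $z_j=x_{2j+1}x_{2j+2}$ exhibiting a surjection of $H(r,n,s)$ onto $\Z * H(r/2,n/2,s/2)$, which is what forces perfectness (Lemma~\ref{lem:r-s=2hard}). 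Your route instead needs an exact formula for the torsion of $\Z^n/C\Z^n$ for a \emph{singular} circulant $C$; ``product of the nonzero eigenvalues, corrected for the rank-one kernel'' is not a theorem as stated, and the correction is not innocent (for the graph Laplacian the eigenvalue product is $n$ times the torsion order). Indeed your sentence ``the factor $f(1)=r-s$ always survives, and its triviality is what produces $|r-s|=2$'' cannot be read literally: with $(r,n,s)=2$ the integer $r-s$ is even, so that factor is never $\pm1$; the factor of $2$ must be absorbed by the unproved correction at the kernel root $t=-1$. Likewise the identification of the surviving resultant with $|H(r/2,n/2,s/2)^{\mathrm{ab}}|$ is asserted rather than derived; it can be extracted from the identity $(t-1)f(t)=(t^2-1)\hat{f}(t^2)$, i.e.\ $f(\lambda)=(\lambda+1)\hat{f}(\lambda^2)$ where $\hat{f}$ is the representer polynomial of the half-size group (valid precisely because $r,n,s$ are all even), but nothing of the kind appears, and it is not clear in your scheme how $(n,r+s)=2$ falls out of the resultant at all (in the paper it comes from the $\Z_2$-quotient lemma, which you would still need in some form). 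You yourself flag this step as ``the main obstacle''; as it stands it is a plausible blueprint, not a proof, exactly at the point where conclusions (b) and (c) are established.
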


We conjecture that condition~(c) cannot hold.

\begin{conjecture}\label{conj:noperfect}
Let $r,s\geq 1$, $n\geq 2$, $r\not \equiv 0$~mod~$n$, $s\not \equiv 0$~mod~$n$. Then $H(r,n,s)^\mathrm{ab}\neq 1$.
\end{conjecture}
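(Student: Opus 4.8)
The plan is to translate the conjecture into a statement about a single integer determinant and then to strip away every case except one irreducibly arithmetic family. First I would record that $H(r,n,s)^{\mathrm{ab}}$ is the cokernel $\Z^n/M\Z^n$ of the circulant relation matrix $M$ whose representer polynomial is
\[
f(t)=\sum_{j=0}^{r-1}t^j-\sum_{j=0}^{s-1}t^{r+j}=\frac{1-2t^r+t^{r+s}}{1-t},
\]
so that $H(r,n,s)^{\mathrm{ab}}=1$ if and only if $|\det M|=1$, where $\det M=\prod_{k=0}^{n-1}f(\zeta^k)$ with $\zeta=e^{2\pi i/n}$. Since $f(1)=r-s$, and the complementary factor $\prod_{k=1}^{n-1}f(\zeta^k)$ equals the resultant $\mathrm{Res}(1+t+\cdots+t^{n-1},\,f)$ of two integer polynomials (the first monic with roots exactly $\zeta^k$, $1\le k\le n-1$), the integer $\det M$ is divisible by $r-s$.

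This disposes of almost everything. If $r=s$ then $f(1)=0$, so $\det M=0$ and the abelianization is infinite; if $|r-s|\ge 2$ then $\det M=0$ or $|\det M|\ge|r-s|\ge 2$. In every case with $|r-s|\ne 1$ the group is therefore not perfect, and using $H(r,n,s)\cong H(s,n,r)$ I may assume $r=s+1$. Then $\gcd(r,s)=1$, and since a root $\omega$ of $q(t)=t^{r+s}-2t^r+1$ with $|\omega|=1$ forces $\omega^r=\omega^s=1$ and hence $\omega=1$, the polynomial $f$ shares no root with $t^n-1$ over $\mathbb{C}$; thus $\det M\ne 0$ and the abelianization is finite.

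To carve off a further family I would reduce modulo $2$. Every coefficient of $f$ is $\pm1$, so $f\equiv (t^{r+s}-1)/(t-1)\pmod 2$, and since $r+s$ is odd a short computation shows that $2\mid\det M$ precisely when $t^{r+s}-1$ and $t^n-1$ share a root other than $1$ in $\overline{\mathbb{F}_2}$, i.e. precisely when $\gcd(r+s,n)>1$. Hence whenever $\gcd(r+s,n)>1$ the order $|\det M|$ is even and the conjecture holds. This leaves exactly the residual case $r=s+1$, $\gcd(r+s,n)=1$, with $r\not\equiv0$ and $s\not\equiv0\pmod n$.

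The main obstacle is this residual case, where $\det M$ is odd and nonzero and proving $|\det M|\ne1$ means showing that the cyclotomic norm $\prod_{k=0}^{n-1}f(\zeta^k)$ is never a unit; equivalently, that for some prime $p$ the polynomial $q(t)=t^{r+s}-2t^r+1$ acquires a root in common with $t^n-1$ over $\overline{\mathbb{F}_p}$, that is, a root of unity $\omega$ of order $d\mid n$ with $d>1$ satisfying $\omega^{r+s}-2\omega^r+1=0$ in characteristic $p$. Because all complex roots of $q$ other than $1$ lie strictly off the unit circle, no help comes from a vanishing determinant, and the analytic size $\prod_{\alpha}|\alpha^n-1|$ of the product is not bounded away from $1$; the statement therefore seems to demand genuinely arithmetic input, namely uniform control of the factorization of $q$ modulo primes $p$ for which $t$ has a prescribed order. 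It is precisely this uniform production of a prime divisor of $|\det M|$ that I expect to be the crux, and the reason the statement is only conjectural.
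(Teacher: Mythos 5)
The statement you were asked to prove is Conjecture~\ref{conj:noperfect}, which the paper explicitly leaves open: it supplies only necessary conditions (Corollary~\ref{cor:necperfectconds}), a finiteness result (Corollary~\ref{cor:finitelymany}), and computational evidence (Remark~\ref{rem:searchforperfect}). So your decision to stop at a reduction rather than claim a proof is the right call, and nothing in the paper would have closed your residual case. Your reductions themselves are correct and arrive at exactly the paper's necessary conditions, but partly by a different route. The divisibility $(r-s)\mid\det M$, via $\det M=f(1)\cdot\mathrm{Res}(1+t+\cdots+t^{n-1},f)$, gives $|r-s|=1$ just as the paper deduces it from~(\ref{eq:detC}); but for the condition $(n,r+s)=1$ you reduce the circulant mod $2$ and compute $\gcd\bigl((t^{r+s}-1)/(t-1),\,t^n-1\bigr)$ over $\mathbb{F}_2$, whereas the paper argues group-theoretically, mapping $H(r,n,s)$ onto a free product of copies of $\Z_2$ (Lemma~\ref{lem:mapstoZ2^t}). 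The paper's route buys strictly more: a lower bound $d(H(r,n,s)^{\mathrm{ab}})\geq (n,r+s)-1$ on generator number (used elsewhere to rule out connected LOG structures), not merely evenness of the order. Your unit-circle analysis of $q(t)=t^{r+s}-2t^r+1$ is essentially the computation embedded in the paper's proof of Theorem~\ref{thm:betti}, and your residual family $r=s+1$, $(n,r+s)=1$, $r,s\not\equiv 0$ mod $n$ is precisely the open content of the conjecture.

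One inaccuracy worth fixing: you assert that $\prod_{\alpha}|\alpha^n-1|$ ``is not bounded away from $1$,'' but for \emph{fixed} $r,s$ this is false. Since $f$ has constant term and leading coefficient of modulus $1$, the product of the moduli of its roots is $1$; as no root lies on the unit circle (your own lemma), some root lies strictly outside it, so $\prod_{\alpha}|\alpha^n-1|\to\infty$ as $n\to\infty$. Your framework therefore already yields the paper's Corollary~\ref{cor:finitelymany} without even invoking Cremona's theorem on unimodular circulants (which the paper cites to handle the general situation, including roots on the circle). What genuinely fails is uniformity across the family: by Remark~\ref{rem:finitelymany}, $H(r,n,s)^{\mathrm{ab}}\cong H(r+\alpha n,n,s+\alpha n)^{\mathrm{ab}}$, so the determinant does not grow along these shifts and a single perfect example would produce infinitely many. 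Restated this way, your closing diagnosis --- that one needs a uniformly produced prime divisor of $\det M$ in the residual case --- is accurate and consistent with the paper's own assessment of why the statement remains conjectural.
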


Knots (i.e. complements of $S^1$ in $S^3$) for which the minimum number of generators required to generate the corresponding knot group is equal to two are called \em 2-generator knots \em and the corresponding knot group is a \em 2-generator knot group. \em Since the only knot for which the corresponding group is cyclic is the unknot, 2-generator knots are, from one perspective, the `simplest' non-trivial knots. All tunnel number one knots (in particular all torus knots) are 2-generator knots, and it has been conjectured that all 2-generator knots are tunnel number one knots  (in~\cite{Bleiler94} this is attributed to Scharlemann~\cite{Scharlemann84}, who attributes it to Casson; see also~\cite[Conjecture~3.9]{Ozawa16}). The conjecture has been shown to hold for cable knots~\cite{Bleiler94}
and the satellite knots that have a two-generator presentation in which at least one generator is represented by a meridian for the knot are classified in~\cite{BleilerJones}. As a corollary to Theorem~\ref{thm:connLOGclassification} we classify when $H(r,n,s)$ is a 2-generator knot group.

\begin{maincorollary}\label{cor:2genknot}
Let $r,s\geq 1$, $n\geq 2$. Then $H(r,n,s)$ is a 2-generator knot group if and only if $r=s$ and $(r,n)=1$, in which case $H(r,n,s)\cong\pres{a,b}{a^n=b^r}$, the fundamental group of the complement of the $(r,n)$-torus knot in~$S^3$.
\end{maincorollary}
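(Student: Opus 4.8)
The plan is to deduce Corollary~\ref{cor:2genknot} from Theorem~\ref{thm:connLOGclassification}, so the main task is to show that among the three possibilities (a), (b), (c) listed there, only case~(a) can yield a 2-generator knot group, and conversely that case~(a) always does. The forward direction of the biconditional should follow by observing that every knot group is a connected LOG group (as noted in the introduction via the Wirtinger presentation), so if $H(r,n,s)$ is a 2-generator knot group then in particular it is a connected LOG group, and Theorem~\ref{thm:connLOGclassification} applies.

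First I would dispose of cases (b) and (c). Case~(b) gives $H(r,n,s)\cong\Z$, which is the unknot group; since a knot group is cyclic only for the unknot, and the unknot is not a 2-generator knot (its group needs only one generator), case~(b) is excluded. For case~(c), the stated hypotheses force $H(r/2,n/2,s/2)$ to be perfect, i.e.\ to have trivial abelianization. The key point I would exploit is that a knot group abelianizes to $\Z$ (equivalently, a connected LOG group abelianizes to the infinite cyclic group, as observed in the introduction). I expect case~(c) to be incompatible with being any knot group at all: a careful look at the abelianization of $H(r,n,s)$ under condition~(c)—using the circulant-matrix Betti-number computations developed earlier in the paper—should show the first Betti number is not~$1$, or else that the group cannot simultaneously be a knot group and satisfy the perfectness constraint. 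This is the step I expect to be the main obstacle, since it requires extracting the precise abelianization invariant in case~(c) rather than merely invoking the theorem's conclusion.

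For the converse, and to finish case~(a), I would use the explicit identification $H(r,n,s)\cong\pres{a,b}{a^n=b^r}$ with $(r,n)=1$, the fundamental group of the $(r,n)$-torus knot. When $(r,n)=1$ and both $r,n\geq 2$ this is a nontrivial torus knot, whose group is a well-known 2-generator knot group (torus knots are tunnel number one, hence 2-generator). I would need to check the boundary behaviour: when $r=1$ or $n=1$ the presentation collapses to $\Z$, giving the unknot, which is not a 2-generator knot; so the 2-generator conclusion requires $r,n\geq 2$ with $(r,n)=1$. I would verify that the hypotheses of case~(a) as inherited in the corollary ($r=s$, $(r,n)=1$) indeed correspond to the nontrivial torus knots, taking care to state the result so that the trivial cases are handled consistently with the claim.

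Assembling these pieces, the biconditional reads as follows: if $H(r,n,s)$ is a 2-generator knot group then it is a connected LOG group, so one of (a)--(c) holds; (b) and (c) are ruled out as above, leaving (a) with $r=s$, $(r,n)=1$; conversely, under (a) the group is the $(r,n)$-torus knot group, which is a 2-generator knot group. The cleanest write-up would lead with the observation that knot groups are connected LOG groups, then treat the three cases in turn, reserving the bulk of the argument for showing that the perfect-abelianization scenario of~(c) cannot be realised by a knot group.
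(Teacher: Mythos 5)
Your skeleton agrees with the paper's proof up to a point: a 2-generator knot group is a connected LOG group, so Theorem~\ref{thm:connLOGclassification} applies, and case~(b) is excluded because $\Z$ is one-generated. But your plan for case~(c) has a genuine gap. You propose to rule it out by abelianization, hoping the circulant-matrix computations show ``the first Betti number is not~$1$.'' This cannot succeed: in case~(c) we have $r\neq s$ and $(r,n,s)=2$, so Theorem~\ref{thm:betti}(a) gives $\beta(H(r,n,s)^\mathrm{ab})=(r,n,s)-1=1$ exactly, and the perfectness of $H(r/2,n/2,s/2)$ is precisely the condition under which every abelianization obstruction vanishes --- that is why case~(c) survives in Theorem~\ref{thm:connLOGclassification} at all. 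Worse, your stated target, that a case-(c) group ``cannot be any knot group at all,'' is not something the paper proves or needs; ruling out case~(c) entirely is in essence Conjecture~\ref{conj:noperfect}, which is open. The corollary requires only the weaker claim that a case-(c) group is not a \emph{2-generator} knot group, and no abelianization invariant can deliver even that on its own, since in case~(c) the abelianization can be $\Z$.

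The missing idea is Lemma~\ref{lem:r-s=2hard}(b), which is a rank argument on a free-product quotient rather than an abelianization argument: the rewriting in that lemma's proof shows $H(r,n,s)$ maps onto $Q\cong \Z * H(r/2,n/2,s/2)$. If $H(r/2,n/2,s/2)$ is nontrivial then, being perfect, it is non-cyclic, so $Q$ --- and hence $H(r,n,s)$ --- cannot be generated by two elements; if it is trivial, the presentation~(\ref{eq:amalgfp}) collapses to give $H(r,n,s)\cong\Z$, which is one-generated. Either way $H(r,n,s)$ is not a 2-generator knot group, which is exactly how the paper disposes of case~(c). Your converse direction (case~(a) gives the $(r,n)$-torus knot group via Lemma~\ref{lem:(r,n)=1knot}, and torus knot groups are 2-generator knot groups) matches the paper, and your attention to the boundary case $r=s=1$ --- where the group is $\Z$, the unknot group, whose minimum number of generators is one --- is a fair observation about the literal statement that the paper passes over silently; but it does not repair the case-(c) gap, which is the substantive missing step.
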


Since connected LOG groups abelianize to the infinite cyclic group $\Z$, the abelianisation of $H(r,n,s)$ is of interest to us. Any finitely generated abelian group $A$ is isomorphic to a group of the form $A_0\oplus \Z^\beta$ where $A_0$ is a finite abelian group and $\beta\geq 0$. The number $\beta=\beta(A)$ is called the \em Betti number \em (or \em torsion-free rank\em) of $A$, and we write $d(A)$ to denote the minimum number of generators of $A$. Clearly $A$ is infinite if and only if $\beta(A)\geq 1$ and if $G$ is a connected LOG group then $\beta(G^\mathrm{ab})=d(G^\mathrm{ab})=1$. Theorem~1 of~\cite{CampbellRobertson75} asserts that for $r\neq s$ we have that $\beta (H(r,n,s)^\mathrm{ab})\geq 1$ if and only if the greatest common divisor $(r,n,s)\geq 2$. In Theorem~\ref{thm:betti} we generalize this to give the value of $\beta(H(r,n,s)^\mathrm{ab})$ in all cases.

\begin{maintheorem}\label{thm:betti}
Let $r,s\geq 1$, $n\geq 2$.
\begin{itemize}
  \item[(a)] If $r\neq s$ then $\beta(H(r,n,s)^\mathrm{ab})=(r,n,s)-1$;
  \item[(b)] if $r=s$ then $\beta(H(r,n,s)^\mathrm{ab})=(r,n)$.
\end{itemize}
\end{maintheorem}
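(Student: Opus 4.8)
The plan is to treat $H(r,n,s)$ as the cyclically presented group $G_n(w)$ with defining word $w=x_0x_1\cdots x_{r-1}(x_rx_{r+1}\cdots x_{r+s-1})^{-1}$ and to read $\beta(H(r,n,s)^{\mathrm{ab}})$ off the rank of the associated relation matrix. Abelianizing, the $i$-th relator contributes the cyclic shift by $i$ of the exponent-sum vector of $w$, so the relation matrix is the $n\times n$ integer circulant $C$ with representer polynomial
\[
f(t)=\sum_{j=0}^{r-1}t^{j}-\sum_{j=0}^{s-1}t^{r+j}\in\Z[t]/(t^n-1).
\]
By the standard spectral theory of circulants, $C$ is diagonalized by the Fourier matrix and has eigenvalues $f(\omega^{k})$ for $0\le k<n$, where $\omega=e^{2\pi i/n}$. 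Hence $\operatorname{rank}_{\mathbb Q}C$ is the number of nonzero eigenvalues and
\[
\beta(H(r,n,s)^{\mathrm{ab}})=n-\operatorname{rank}_{\mathbb Q}C=\#\{\,k:0\le k<n,\ f(\omega^{k})=0\,\},
\]
so the problem reduces to counting the $n$-th roots of unity that are zeros of $f$.

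To locate these zeros I would clear the denominator: multiplying by $t-1$ and summing the geometric series gives $(t-1)f(t)=-g(t)$ with $g(t)=t^{r+s}-2t^{r}+1$. Thus for an $n$-th root of unity $\zeta\neq1$ one has $f(\zeta)=0$ exactly when $g(\zeta)=0$, while the value at $\zeta=1$ must be computed directly, giving $f(1)=r-s$. The crux—and the step I expect to be the main obstacle—is the sharp claim that for $|\zeta|=1$ one has $g(\zeta)=0$ if and only if $\zeta^{r}=\zeta^{s}=1$. One direction is immediate, since $\zeta^r=\zeta^s=1$ forces $g(\zeta)=1-2+1=0$. For the converse I would rewrite $g(\zeta)=0$ as $\zeta^{r+s}+1=2\zeta^{r}$ and take absolute values to get $|\zeta^{r+s}+1|=2$; as $|\zeta^{r+s}|=|1|=1$, the triangle inequality is saturated, forcing $\zeta^{r+s}=1$, and feeding this back yields $2\zeta^{r}=2$, so $\zeta^{r}=1$ and then $\zeta^{s}=\zeta^{r+s}\zeta^{-r}=1$. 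Since $\zeta^{r}=\zeta^{s}=1$ is equivalent to $\zeta^{(r,s)}=1$, the unimodular zeros of $g$ are precisely the $(r,s)$-th roots of unity.

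It then remains to count. Among the $n$-th roots of unity, those with $\zeta^{(r,s)}=1$ are exactly the $(n,(r,s))=(r,n,s)$ common roots, so $g(\omega^{k})=0$ for exactly $(r,n,s)$ values of $k$, one being $k=0$. If $r\neq s$ then $f(1)=r-s\neq0$, so $k=0$ is excluded while each of the remaining $(r,n,s)-1$ values (where $\zeta\neq1$ and $g(\zeta)=0$, hence $f(\zeta)=0$) is counted, giving $\beta=(r,n,s)-1$ and proving (a). If $r=s$ then $f(1)=0$, so $k=0$ is now included, and since $(r,s)=r$ we have $(r,n,s)=(r,n)$, giving $\beta=(r,n)$ and proving (b). As a consistency check, when $r=s$ one has $g(t)=(t^{r}-1)^{2}$, whence $f(t)=-(t-1)(1+t+\cdots+t^{r-1})^{2}$, which vanishes at an $n$-th root of unity exactly when $\zeta^{r}=1$, recovering the count $(n,r)$.
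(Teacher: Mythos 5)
Your proof is correct, and while it shares its skeleton with the paper's argument, the execution differs at two genuine points, so a comparison is worthwhile. Both proofs rest on the same foundation: the relation matrix of $H(r,n,s)^{\mathrm{ab}}$ is the circulant with representer polynomial $f(t)=1+t+\cdots+t^{r-1}-t^r-\cdots-t^{r+s-1}$, the Betti number is read off its rank, and everything hinges on showing that a root of unity $\zeta$ satisfying $\zeta^{r+s}-2\zeta^{r}+1=0$ must have $\zeta^r=\zeta^s=1$. The paper works with $\deg\gcd(f(t),t^n-1)$ via the rank formula: it extracts the common factor $1+t+\cdots+t^{d-1}$ with $d=(r,n,s)$, shows by a cyclotomic-polynomial computation that the residual factors $F$ and $G$ are coprime (treating $r=s$ and $s>r$ separately), and at the crux conjugates the equation $1-2\lambda^r+\lambda^{r+s}=0$ (using $\bar{\lambda}=\lambda^{-1}$) and multiplies the two equations twice over to force $(\lambda^r-1)^2=0$ and $(\lambda^{r+s}-1)^2=0$. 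You instead count the vanishing eigenvalues $f(\omega^k)$ directly, which is legitimate because $t^n-1$ is squarefree, so $\deg\gcd(f,t^n-1)$ is exactly the number of $n$-th roots of unity annihilating $f$; this lets you bypass the factorization bookkeeping and the coprimality verification entirely. At the crux you saturate the triangle inequality in $|\zeta^{r+s}+1|=2$ to get $\zeta^{r+s}=1$ and then $\zeta^r=1$ in one stroke, which is slightly slicker than the paper's two conjugate-and-multiply computations. Your endgame, counting the $(r,n,s)$ roots with $\zeta^{(r,s)}=1$ and including or excluding $\zeta=1$ according to $f(1)=r-s$, also unifies parts (a) and (b), where the paper handles $r=s$ by a separate gcd calculation. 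Two cosmetic remarks: you recycle the letter $g$ for $t^{r+s}-2t^r+1$ while the paper reserves $g(t)=t^n-1$; and identifying $\mathrm{rank}_{\mathbb{Q}}C$ with the number of nonzero complex eigenvalues silently uses that the rank of an integer matrix is the same over $\mathbb{Q}$ and $\mathbb{C}$ --- standard, but worth a word. Neither affects correctness.
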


In support of Conjecture~\ref{conj:noperfect} we have the following.
\begin{corollary}\label{cor:finitelymany}
Let $r\geq 1$ (resp. $s\geq 1$). Then there are at most finitely many values of $n\geq 2$, $s\geq 1$ (resp. $r\geq 1$) such that $H(r,n,s)^\mathrm{ab}=1$.
\end{corollary}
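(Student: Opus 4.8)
The plan is to fix $r$ and show that both $s$ and $n$ are bounded; the case of fixed $s$ then follows immediately from the isomorphism $H(r,n,s)\cong H(s,n,r)$ recorded in the introduction. The first step is to bound $s$ by exhibiting a cyclic quotient of the abelianisation. Abelianising the defining relator and composing with the exponent-sum homomorphism $x_i\mapsto 1$ sends each relator to $r-s$, so the map $\Z^n\to\Z/(r-s)\Z$, $e_i\mapsto 1$, factors through $H(r,n,s)^\mathrm{ab}$ and is surjective. Hence $\Z/(r-s)\Z$ is a quotient of $H(r,n,s)^\mathrm{ab}$, and $H(r,n,s)^\mathrm{ab}=1$ forces $|r-s|=1$. (When $r=s$ this quotient is merely $\Z$, but Theorem~\ref{thm:betti}(b) already gives $\beta\geq 1$, so that case is excluded anyway.) Thus any solution has $s\in\{r-1,r+1\}$, leaving at most two values of $s$, and it remains only to bound $n$ for each.

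Now fix $r$ and $s$ with $|r-s|=1$. The abelianisation is the cokernel of the circulant matrix with representer polynomial $f(t)=(1+t+\cdots+t^{r-1})-(t^r+\cdots+t^{r+s-1})$, which satisfies $(t-1)f(t)=2t^r-t^{r+s}-1$. Since $|r-s|=1$ we have $(r,n,s)=1$, so by Theorem~\ref{thm:betti}(a) the group is finite, and its order is the absolute resultant
\[ |H(r,n,s)^\mathrm{ab}|=\left|\mathrm{Res}(f(t),t^n-1)\right|=\prod_{\beta}\left|\beta^n-1\right|, \]
the product running over the roots $\beta$ of $f$ (here the leading coefficient of $f$ is $\pm1$). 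It therefore suffices to show that this product tends to infinity with $n$, for then it equals $1$ for only finitely many $n$.

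The key step, which makes the argument elementary, is to locate the roots of $f$. Writing $P(t)=t^{r+s}-2t^r+1$, a direct check shows that $|t|=1$ together with $P(t)=0$ forces $t^r=t^s=1$, so the roots of $P$ on the unit circle are exactly the $(r,s)$-th roots of unity. Because $|r-s|=1$ gives $(r,s)=1$, the only such root is $t=1$; it is simple, since $P'(1)=s-r\neq 0$, and it is precisely the root removed on passing from $P$ to $f$. Hence every root $\beta$ of $f$ satisfies $|\beta|\neq 1$, while the product of all the roots has modulus $1$ (the constant term and leading coefficient of $f$ are $\pm1$); consequently $f$ has at least one root of modulus greater than $1$. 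For such a root $|\beta^n-1|\to\infty$, whereas each root with $|\beta|<1$ contributes a factor tending to $1$, so $\prod_\beta|\beta^n-1|\to\infty$, as required.

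I expect the genuine obstacle to be controlling $|H(r,n,s)^\mathrm{ab}|$ uniformly as $n\to\infty$: in general $\tfrac1n\log|\mathrm{Res}(f,t^n-1)|\to\log M(f)$, and to conclude one would need both that the Mahler measure $M(f)$ exceeds $1$ and a way to handle possible roots of $f$ on the unit circle, which is exactly the kind of Lehmer-type difficulty one wants to avoid. The reduction to $|r-s|=1$ is what dissolves this: it forces $(r,s)=1$, so $f$ has no roots on the unit circle at all, and the divergence of the product of factors $|\beta^n-1|$ becomes transparent. The remaining details are routine verifications of the resultant formula and of the two displayed identities for $f$.
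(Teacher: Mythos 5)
Your proof is correct, and it takes a genuinely different route from the paper at the decisive step. Both arguments open identically: $H(r,n,s)^\mathrm{ab}=1$ forces the exponent sum $f(1)=r-s$ to be $\pm 1$ (your cyclic-quotient map $x_i\mapsto 1$ is the same computation as the paper's appeal to the determinant formula), leaving at most two values of $s$ for each fixed $r$, with the case $r=s$ excluded by Theorem~\ref{thm:betti}(b). From there the paper verifies the two hypotheses of Cremona's theorem on unimodular integer circulants --- $f(0)\neq 0$, and $f$ has no cyclotomic factor, the latter deduced from Theorem~\ref{thm:betti}(a) via the rank formula --- and cites \cite{Cremona08} for the finiteness of the set of $n$ with determinant $\pm 1$. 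You instead prove the stronger fact that $f$ has no roots on the unit circle at all: from $(t-1)f(t)=-(t^{r+s}-2t^r+1)$, a unimodular root gives $|t^{r+s}+1|=2$, forcing $t^{r+s}=1$ and then $t^r=t^s=1$, and $(r,s)=1$ leaves only the simple root $t=1$, which is exactly the root cancelled by the factor $t-1$; since the constant and leading coefficients of $f$ are $\pm 1$, some root has modulus exceeding $1$, whence $|H(r,n,s)^\mathrm{ab}|=\prod_\beta |\beta^n-1|\to\infty$ and the finiteness follows elementarily (note that the absence of unit-circle roots also guarantees the determinant is nonzero, so the order formula applies). Your unit-circle manipulation is essentially the same one the paper performs with complex conjugates inside the proof of Theorem~\ref{thm:betti}, so you are recycling an ingredient already present in the paper to bypass the external citation. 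What the citation buys the paper is generality: Cremona's theorem copes with representer polynomials having non-cyclotomic unimodular roots, which is precisely the Lehmer-type difficulty you correctly identify and which your hypothesis $|r-s|=1$ dissolves. What your route buys is a self-contained, elementary proof together with the stronger conclusion that $|H(r,n,s)^\mathrm{ab}|\to\infty$ as $n\to\infty$ for fixed $r,s$ with $|r-s|=1$, which the paper's argument does not directly yield.
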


\begin{remark}\label{rem:finitelymany}
\em It is not hard to prove that $H(r,n,s)^\mathrm{ab}\cong H(r+\alpha n,n,s+\alpha n)^\mathrm{ab}$ for all $\alpha\geq 0$, so if there is a choice of $r,s\geq 1$, $n\geq 2$, $r\not \equiv 0$~mod~$n$, $s\not \equiv 0$~mod~$n$, such that $H(r,n,s)^\mathrm{ab}=1$ then there are infinitely many such choices of $r,n,s$ such that $H(r,n,s)^\mathrm{ab}=1$. Therefore Corollary~\ref{cor:finitelymany} does not imply that there are at most finitely many $r,s\geq 1$, $n\geq 2$ such that $H(r,n,s)^\mathrm{ab}=1$.
\end{remark}
The proofs of Theorem~\ref{thm:betti} and Corollary~\ref{cor:finitelymany} use the theory of circulant matrices. The circulant matrix $C=\mathrm{circ}_n(a_0,\ldots, a_{n-1})$ is the $n\times n$ matrix whose first row is $(a_0,\ldots, a_{n-1})$ and where each subsequent row is a cyclic shift of its predecessor by one column. Thus if, for each $0\leq i<n$, the exponent sum of $x_i$ in $w(x_0,\ldots ,x_{n-1})$ is $a_i$ then the relation matrix of $G_n(w)$ is the circulant matrix $C$. The \em representer polynomial \em of $C$ is the polynomial
\[ f(t)=\sum_{i=0}^{n-1}a_it^i\]
and we define $g(t)=t^n-1$. It is well known that
\begin{alignat}{1}
 |\mathrm{det} (C)|=\Big| \prod_{g(\lambda)=0}f(\lambda)\Big|\label{eq:detC}
\end{alignat}
and so this is the order $|G_n(w)^\mathrm{ab}|$ when it is non-zero, and $G_n(w)^\mathrm{ab}$ is infinite otherwise. This fact has long been used in the theory of cyclically presented groups (see~\cite{Johnson80}) and, in particular, it was used to obtain~\cite[Theorem~1]{CampbellRobertson75}. The rank of $C$ can also be expressed in terms of the polynomials $f,g$; specifically
\begin{alignat}{1}
 \mathrm{rank}(C)=n-\mathrm{deg}(\mathrm{gcd}(f(t),g(t)))\label{eq:rank}
\end{alignat}
where $\mathrm{deg}(\cdot)$ denotes the degree (see~\cite[Proposition~1.1]{Ingleton56} or~\cite[Theorem~1]{Newman83}) and so
\begin{alignat}{1}
\beta(G_n(w)^\mathrm{ab})=\mathrm{deg}(\mathrm{gcd}(f(t),g(t)))\label{eq:betti}
\end{alignat}
and this is the engine of the proof of Theorem~\ref{thm:betti}. While the formula~(\ref{eq:rank}) is old, we believe that it has not been applied to cyclically presented groups before. Moreover, we expect it to be of independent interest in studying other classes of cyclically presented groups, and with wider applications than that considered here. The proof of Corollary~\ref{cor:finitelymany} uses a result from~\cite{Cremona08} for determining when $|\mathrm{det}(C)|=1$; that is, when $C$ is a unimodular matrix.

\section{A class of groups $H(r,n,s)$ that are knot groups}\label{sec:knotgroups}

If $n$ is odd then
\begin{alignat}{1}
H(2,n,2)=\pres{x_0,\ldots, x_{n-1}}{x_ix_{i+1}=x_{i+2}x_{i+3}\ (0\leq i <n)}.\label{eq:H2n2}
\end{alignat}
The $n-1$ relations $x_ix_{i+1}=x_{i+2}x_{i+3}$  ($0\leq i < n-1$) imply the $n$'th relation $x_{n-1}x_0=x_1x_2$, so this redundant relation may be eliminated to give the $n$ generator, $n-1$ relation presentation
\[ \pres{x_0,\ldots, x_{n-1}}{x_ix_{i+1}=x_{i+2}x_{i+3}\ (0\leq i < n-1)}\]
of $H(2,n,2)$. This is precisely the Dehn presentation for the $(2,n)$-torus knot (see, for example, \cite[page~155]{GilbertPorter}, where the case $n=5$ is illustrated).

Furthermore, if $n$ is odd, then the following sequence of relations are implied by the relations of~(\ref{eq:H2n2})
\[ x_ix_{i+1}=x_{i+2}x_{i+3}=x_{i+4}x_{i+5}=\cdots =x_{i+n-3}x_{i+n-2}=x_{i-1}x_i\]
and, in particular, $x_ix_{i+1}=x_{i-1}x_i$. Conversely, the relations $x_ix_{i+1}=x_{i-1}x_i$ ($0\leq i<n$) imply the sequence of relations
\[ x_ix_{i+1}=x_{i-1}x_i= x_{i-2}x_{i-1}=x_{i-3}x_{i-2}=\cdots = x_{i+2}x_{i+3}\]
and, in particular, $x_ix_{i+1}=x_{i+2}x_{i+3}$. Thus the group $H(2,n,2)$ is also given by the presentation
\[\pres{x_0,\ldots ,x_{n-1}}{x_ix_{i+1}=x_{i-1}x_i\ (0\leq i<n)}\]
which is a (cyclic) Wirtinger presentation for the $(2,n)$-torus knot (\cite[pages~151--153]{GilbertPorter}) arising from a LOG where the underlying graph is a cycle.

Thus if $n$ is odd we have that $H(2,n,2)$ is the fundamental group of the complement of the $(2,n)$-torus knot in $S^3$. More generally we have the following.

\begin{lemma}\label{lem:(r,n)=1knot}
Let $r\geq 1$, $n\geq 2$. If $(r,n)=1$ then $H(r,n,r)\cong \pres{a,b}{a^n=b^r}$, the fundamental group of the complement of the $(r,n)$-torus knot in $S^3$.
\end{lemma}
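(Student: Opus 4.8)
The plan is to reduce the defining presentation by Tietze transformations to $\langle a,b\mid a^n=b^r\rangle$. Write $p_i=x_ix_{i+1}\cdots x_{i+r-1}$ for the product of $r$ consecutive generators, so that the defining relations of $H(r,n,r)$ read $p_i=p_{i+r}$ for $0\le i<n$. Because $(r,n)=1$, the map $i\mapsto i+r$ is a single $n$-cycle on $\Z/n$, so these relations force all the $p_i$ to be equal; I introduce a new generator $a$ together with the relation $a=p_0$, so that $p_i=a$ for every $i$. Comparing $p_i=a$ with $p_{i+1}=a$ and using the free identity $p_{i+1}=x_i^{-1}p_ix_{i+r}$ yields the shift relations $x_{i+r}=a^{-1}x_ia$ for all $i$, and conversely these relations together with $a=p_0$ recover $p_i=a$ for all $i$ (conjugation by $a$ sends $p_i$ to $p_{i+r}$, and $(r,n)=1$ propagates $p_0=a$ around the cycle). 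Thus $H(r,n,r)$ is presented on $x_0,\dots,x_{n-1},a$ by the shift relations and $a=p_0$.

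Two consequences are immediate. First, $a^{-n}x_ia^{n}=x_{i+nr}=x_i$, so $a^n$ is central. Second, since $(r,n)=1$ every $x_i$ is a conjugate $a^{-k}x_0a^{k}$ of $x_0$ by a power of $a$, so $H(r,n,r)=\langle a,x_0\rangle$ and I may eliminate $x_1,\dots,x_{n-1}$. The candidate for the second torus-knot generator comes from a telescoping identity: since $(r,n)=1$ the indices $0,r,2r,\dots,(n-1)r$ run once through $\Z/n$, and the blocks making up $p_0,p_r,p_{2r},\dots,p_{(n-1)r}$ are consecutive, so their concatenation is the word $x_0x_1\cdots x_{nr-1}$ with subscripts read mod $n$, which is literally $(x_0x_1\cdots x_{n-1})^r$. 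As each factor equals $a$, this gives the key relation \[ a^n=(x_0x_1\cdots x_{n-1})^r. \] Setting $b=x_0x_1\cdots x_{n-1}$, I obtain $a^n=b^r$ in $H(r,n,r)$; note that this already shows the assignment $a\mapsto p_0$, $b\mapsto x_0\cdots x_{n-1}$ defines a homomorphism $\langle a,b\mid a^n=b^r\rangle\to H(r,n,r)$.

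It remains to prove that this homomorphism is an isomorphism, equivalently that after eliminating $x_1,\dots,x_{n-1}$ the presentation collapses to the single relation $a^n=b^r$. Carrying out the elimination explicitly, the shift relations used to define $x_{kr\bmod n}=a^{-k}x_0a^{k}$ leave behind one cyclic-closure relation $[a^n,x_0]=1$ (the consistency condition $x_{nr}=x_0$), while $a=p_0$ becomes a single relation once each $x_i$ is rewritten as $a^{-(ir^{-1}\bmod n)}\,x_0\,a^{(ir^{-1}\bmod n)}$. The main obstacle is the bookkeeping in this last step: I must show that, modulo centrality of $a^n$, the rewritten relation $a=p_0$ together with $[a^n,x_0]=1$ is equivalent to $a^n=b^r$ with $b=x_0\cdots x_{n-1}$, and that $a$ and $b$ (equivalently $a$ and $x_0$) generate with no surplus relations. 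I expect to handle this either by constructing the inverse homomorphism $H(r,n,r)\to\langle a,b\mid a^n=b^r\rangle$ directly -- defining the image of $x_0$ so that the images of all the $p_i$ coincide with $a$, which is possible because $a^n=b^r$ is central in the target and so the conjugates $a^{-k}(\,\cdot\,)a^{k}$ are automatically consistent around the $n$-cycle -- and then checking that the two composites are identities, or by comparing with the Wirtinger/Dehn presentation already exhibited in the case $r=2$. The delicate point throughout is the appearance of $r^{-1}\bmod n$, which reorders the generators and must be tracked carefully; this is precisely where the hypothesis $(r,n)=1$ is used.
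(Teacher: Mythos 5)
Your opening moves are sound and essentially the same as the paper's: your $p_i$ are its $a_i$, the equality of all $p_i$ from $(r,n)=1$, the shift relations $x_{i+r}=a^{-1}x_ia$, the centrality of $a^n$, and the full-cycle telescoping $p_0p_r\cdots p_{(n-1)r}=(x_0x_1\cdots x_{n-1})^r$ giving $a^n=b^r$ are all correct (the last is in fact a tidier route to that relation than the paper takes). But the proof stops exactly at the decisive step, and two things are genuinely missing. First, surjectivity of your candidate homomorphism $\pres{a,b}{a^n=b^r}\to H(r,n,r)$: you show $H=\gpres{a,x_0}$, but never that $x_0\in\gpres{a,b}$, and nothing you have proved implies it. Second, the collapse of the reduced presentation $\pres{a,x_0}{[a^n,x_0],\ a=\widetilde{p}_0(a,x_0)}$ to the single relation $a^n=b^r$ is precisely the ``main obstacle'' you name and then defer (``I expect to handle this either by \dots''); your sketch of an inverse homomorphism never specifies the image of $x_0$, and centrality of $a^n=b^r$ in the target does not by itself produce an element whose $r$-fold shifted products all equal $a$. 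The suggested fallback of comparing with the Dehn/Wirtinger presentations also cannot work in general: the paper exhibits those only for $r=2$ with $n$ odd.

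The missing idea is a second, Bezout-weighted telescoping, and it is the engine of the paper's proof. Choose $\gamma,\delta\geq 0$ with $\gamma r-\delta n=1$ and telescope $\gamma$ consecutive blocks rather than $n$: since $\gamma r=\delta n+1$, one has $p_0p_rp_{2r}\cdots p_{(\gamma-1)r}=x_0x_1\cdots x_{\gamma r-1}=(x_0x_1\cdots x_{n-1})^{\delta}x_0$, i.e.\ $a^{\gamma}=b^{\delta}x_0$, hence $x_0=b^{-\delta}a^{\gamma}$ (and likewise $x_i=b_i^{-\delta}a^{\gamma}$, where $b_i=x_i\cdots x_{i+n-1}$ satisfies $b_{i+1}=a^{-\gamma}b_ia^{\gamma}$). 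This single identity settles surjectivity and simultaneously supplies the substitution that eliminates every $x_i$ outright; substituting it into the remaining relations, the paper reduces to $\pres{a,b}{b^{\delta r}=a^{\delta n},\ b^{\gamma r}=a^{\gamma n}}$, and these two relations are together equivalent to $a^n=b^r$ exactly because $\gamma r-\delta n=1$ (one direction being $a^n=(a^{\gamma n})^r(a^{\delta n})^{-n}=(b^{\gamma r})^r(b^{\delta r})^{-n}=(b^r)^{\gamma r-\delta n}=b^r$, the other that $a^n=b^r$ implies both relations). Until you carry out this step, or an equivalent device, what you have is a correct two-generator presentation on $a,x_0$ plus an unverified claim that it defines the torus knot group; the ``bookkeeping'' you flag is the actual mathematical content of the lemma, not a routine check.
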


\begin{proof}
Since $(r,n)=1$ there exist $\gamma,\delta $ such that $\gamma r-\delta n=1$. Let $H=H(r,n,r)$, then
\begin{alignat*}{1}
H
&=\threethreepres
{x_0,\ldots ,x_{n-1},}{a_0,\ldots ,a_{n-1},}{b_0,\ldots ,b_{n-1}}
{x_ix_{i+1}\ldots x_{i+r-1}=x_{i+r}x_{i+r+1}\ldots x_{i+2r-1},}{a_i=x_ix_{i+1}\ldots x_{i+r-1},}{b_i=x_ix_{i+1}\ldots x_{i+n-1}\ (0\leq i<n)}\\
&=\threethreepres
{x_0,\ldots ,x_{n-1},}{a_0,\ldots ,a_{n-1},}{b_0,\ldots ,b_{n-1}}
{a_i=a_{i+r},}{a_i=x_ix_{i+1}\ldots x_{i+r-1},}{b_i=x_ix_{i+1}\ldots x_{i+n-1}\ (0\leq i<n)}.
\end{alignat*}
Now
\[ a_ia_{i+r}a_{i+2r}\ldots a_{i+(\gamma -1)r}=(x_ix_{i+1}\ldots x_{i+n-1})^\delta x_i=b_i^\delta x_i\]
and
\[ x_i^{-1}b_ix_i = x_{i+1}x_{i+2}\ldots x_{i+n-1}x_{i}=b_{i+1}\]
so we may add the relations $a_ia_{i+r}a_{i+2r}\ldots a_{i+(\gamma -1)r}=b_i^\delta x_i$ and $x_i^{-1}b_ix_i=b_{i+1}$ to get
\begin{alignat*}{1}
H
&=\threethreepres
{x_0,\ldots ,x_{n-1},}{a_0,\ldots ,a_{n-1},}{b_0,\ldots ,b_{n-1}}
{a_i=a_{i+r},\ a_ia_{i+r}a_{i+2r}\ldots a_{i+(\gamma -1)r}=b_i^\delta x_i,}{a_i=x_ix_{i+1}\ldots x_{i+r-1},\ x_i^{-1}b_ix_i=b_{i+1},}{b_i=x_ix_{i+1}\ldots x_{i+n-1}\ (0\leq i<n)}.\\
\intertext{Now $(r,n)=1$ so the sequence of equalities $a_0=a_r=a_{2r}=\cdots =a_{(n-1)r}=a_0$ includes all $a_i$, so $a_i=a_0$ for all $i$ and so we can eliminate $a_1,\ldots ,a_{n-1}$ and write $a=a_0$ to get}
H
&=\twothreepres
{x_0,\ldots ,x_{n-1},}{a,b_0,\ldots ,b_{n-1}}
{a^\gamma =b_i^\delta x_i,}{a=x_ix_{i+1}\ldots x_{i+r-1},\ x_i^{-1}b_ix_i=b_{i+1},}{b_i=x_ix_{i+1}\ldots x_{i+n-1}\ (0\leq i<n)}\\
&=\twothreepres
{x_0,\ldots ,x_{n-1},}{a,b_0,\ldots ,b_{n-1}}
{x_i=b_i^{-\delta }a^\gamma, }{a=x_ix_{i+1}\ldots x_{i+r-1},\ x_i^{-1}b_ix_i=b_{i+1},}{b_i=x_ix_{i+1}\ldots x_{i+n-1}\ (0\leq i<n)}\\
&=\twothreepres
{x_0,\ldots ,x_{n-1}}{a,b_0,\ldots ,b_{n-1},}
{x_i=b_i^{-\delta }a^\gamma, }{a=x_ix_{i+1}\ldots x_{i+r-1},\ a^{-\gamma }b_ia^\gamma =b_{i+1},}{b_i=x_ix_{i+1},\ldots x_{i+n-1}\ (0\leq i<n)}\\
&=\onethreepres
{a,b_0,\ldots ,b_{n-1}}
{a=(b_i^{-\delta }a^\gamma )  (b_{i+1}^{-\delta }a^\gamma ) \ldots (b_{i+r-1}^{-\delta }a^\gamma ),}{a^{-\gamma }b_ia^\gamma =b_{i+1},}
{b_i=(b_i^{-\delta }a^\gamma ) (b_{i+1}^{-\delta }a^\gamma ) \ldots (b_{i+n-1}^{-\delta }a^\gamma )\ (0\leq i<n)}\\
&=\onetwopres
{a,b_0,\ldots ,b_{n-1}}
{a=\prod_{j=0}^{r-1}b_{i+j}^{-\delta }a^\gamma ,\ a^{-\gamma }b_ia^\gamma =b_{i+1},}
{b_i=\prod_{k=0}^{n-1}b_{i+k}^{-\delta }a^\gamma \ (0\leq i<n)}.
\end{alignat*}
The relations $a^{-\gamma }b_ia^\gamma =b_{i+1}$ imply $a^{-\gamma }b_i^{-\delta }a^\gamma =b_{i+1}^{-\delta }$ so  $b_i^{-\delta }a^\gamma =a^{\gamma }b_{i+1}^{-\delta }$. Therefore
\[ \prod_{j=0}^{r-1} b_{i+j}^{-\delta }a^\gamma =a^{\gamma r } b_{i+r}^{-\delta r }\]
so the relation $a=\prod_{j=0}^{r-1} b_{i+j}^{-\delta }a^\gamma $ is equivalent to $a=a^{\gamma r} b_{i+r}^{-\delta r }$ which is equivalent to $a^{\gamma r -1}=b_{i+r}^{\delta r }$, i.e.\,to $a^{\delta n }=b_{i+r}^{\delta r }$, so the set of relations
$a=\prod_{j=0}^{r-1} b_{i+j}^{-\delta }a^\gamma $ is equivalent to the set of relations $a^{\delta n }=b_{i}^{\delta r }$ ($0\leq i <n$).
Similarly we have
\[ \prod_{k=0}^{n-1} b_{i+k}^{-\delta }a^\gamma =a^{\gamma n} b_{i+n}^{-\delta n}\]
so the relation $b_i=\prod_{k=0}^{n-1}b_{i+k}^{-\delta }a^\gamma $ is equivalent to $b_i=a^{\gamma n} b_{i}^{-\delta n}$, which is equivalent to $b_i^{1+\delta n}=a^{\gamma n}$, i.e.\,to $b_i^{\gamma r}=a^{\gamma n}$. Thus
\begin{alignat*}{1}
H
&=\onetwopres{a,b_0,\ldots ,b_{n-1}}{b_i^{\delta r }=a^{\delta n }, a^{-\gamma }b_ia^\gamma =b_{i+1},}{b_i^{\gamma r }=a^{\gamma n }\ (0\leq i<n)}\\
&=\onetwopres{a,b_0,\ldots ,b_{n-1}}{b_i^{\delta r }=a^{\delta n }, b_j=a^{-j\gamma }b_0a^{j\gamma }\ (1\leq j<n),}{b_i^{\gamma r }=a^{\gamma n }\ (0\leq i<n), a^{-\gamma n }b_0a^{\gamma n }=b_0}\\
&=\pres{a,b}{b^{\delta r }=a^{\delta n }, b^{\gamma r }=a^{\gamma n }, a^{-\gamma n }ba^{\gamma n }=b}\quad{\mathrm{where}~b=b_0}\\
&=\pres{a,b}{b^{\delta r }=a^{\delta n }, b^{\gamma r }=a^{\gamma n }}.
\end{alignat*}
Now
\begin{alignat*}{1}
  a^n=a^{n\cdot 1}=a^{n(\gamma r-\delta n)}=(a^{\gamma n })^r \cdot (a^{\delta n })^{-n}&=(b^{\gamma r })^r\cdot (b^{\delta r })^{-n}\\
  &\quad \quad =(b^r)^{\gamma r-\delta n}=(b^r)^1=b^r
\end{alignat*}
so we may add the relation $a^n=b^r$ and then eliminate the redundant relations $b^{\delta r }=a^{\delta n }, b^{\gamma r }=a^{\gamma n }$ to get $H=\pres{a,b}{a^n=b^r}$, as required.
\end{proof}

\section{Betti numbers and perfect groups}\label{sec:betti}

In this section we prove Theorem~\ref{thm:betti} and Corollary~\ref{cor:finitelymany}.

\begin{proof}[Proof of Theorem~\ref{thm:betti}]
Let $d=(r,n,s)$, $R=r/d$, $N=n/d$, $S=s/d$. The representer polynomial of $H(r,n,s)$ is
\begin{alignat}{1}
f(t)=1+t+\ldots+t^{r-1}-t^{r}-t^{r+1}-\ldots-t^{r+s-1}.\label{eq:fforHrns}
\end{alignat}
Since $H(r,n,s)\cong H(s,n,r)$ we may assume $s\geq r$. If $s=r$ then
\begin{alignat*}{1}
f(t)&=(1-t^{r})(1+t+\ldots+t^{r-1})\\
&=(1-t^{r})(1+t+\ldots+t^{d-1})(1+t^d+\ldots+t^{(R-1)d});
\intertext{if $s>r$ then}
f(t)&=(1-t^{r})(1+t+t^2+\ldots+t^{r-1})-t^{2r}(1+t+\ldots+t^{s-r-1})\\
&=(1-t^{r})(1+t+t^2+\ldots+t^{d-1})(1+t^d+\ldots+t^{(R-1)d})\\
&\ \quad \quad \quad \quad -t^{2r}(1+t+t^2+\ldots+t^{d-1})(1+t^d+\ldots+t^{(S-R-1)d}).
\end{alignat*}
That is,
\[ f(t)= (1+t+t^2+\ldots+t^{d-1})F(t)\]
where
\begin{alignat}{1}
F(t)&= (1-t^{r})(1+t^d+\ldots+t^{(R-1)d})\label{eq:Fs=r}
\end{alignat}
when $s=r$ and
\begin{alignat}{1}
F(t)&=(1-t^{r})(1+t^d+\ldots+t^{(R-1)d})-t^{2r}(1+t^d+\ldots+t^{(S-R-1)d})\label{eq:Fs>r}
\end{alignat}
when $s>r$.
By~(\ref{eq:betti}) we must find the degree of the highest common factor of $f(t)$ and $g(t)=t^n-1$. Observe that
\begin{alignat*}{1}
g(t)&=(1+t+t^2+\ldots+t^{d-1})G(t)
\end{alignat*}
where
\[G(t)=(1-t)(1+t^d+t^{2d}+\ldots + t^{(N-1)d}).\]
Therefore $(f(t),g(t))= (1+t+t^{2}+\ldots + t^{d-1}) (F(t),G(t))$.

Suppose $r=s$ then $F(t)$ is as given at~(\ref{eq:Fs=r}). Now, writing $\Phi_m(t)$ to denote the $m$th cyclotomic polynomial, we have
\begin{alignat*}{1}
&\ (1+t^d+\ldots+t^{(R-1)d}, 1+t^d+t^{2d}+\ldots + t^{(N-1)d}) \\
&\ \qquad \qquad \qquad \qquad \qquad = \left(\prod_{\delta|R,\delta>1}\Phi_\delta(t^d),\prod_{\delta|N,\delta>1}\Phi_\delta(t^d)\right) =1
\end{alignat*}
since $(R,N)=1$. Therefore $(F(t),G(t))=(1-t^{r},1-t)=1-t$ so $(f(t),g(t))=(1+t+t^{2}+\ldots + t^{d-1})(1-t)=1-t^d$, which is of degree $d=(r,n,s)$, as required.

Suppose then that $s>r$ so $F(t)$ is as given at~(\ref{eq:Fs>r}). We must show $(F(t),G(t))=1$. If $\lambda^d=1$, then $\lambda$ is not a root of $F(t)$ (for otherwise we get a contradiction to $S>R$). Therefore $\lambda$ is a root of  $F(t)$ and of $G(t)$ if and only if it is a root of $(1-t^d)F(t)$ and of $(1-t^d)G(t)$. Assume $\lambda$ is such a root. Then, after simplifying, the equations $(1-\lambda^d)F(\lambda)=0$, $(1-\lambda^d)G(\lambda)=0$ imply
\begin{alignat*}{1}
1-2\lambda^r+\lambda^{r+s}=0,\\
1-\lambda^n=1.
\end{alignat*}
Since $\lambda^n=1$ we have $\lambda=e^{i\theta}$ for some $\theta$, and $1=|\lambda|=\lambda\bar{\lambda}$ so $\lambda^{-1}=\bar{\lambda}$. Taking the complex conjugate of the first equation then gives
\[1-2\lambda^{-r}+\lambda^{-r-s}=0.\]
Multiplying the equations $\lambda^{r+s}=2\lambda^r-1$ and $\lambda^{-(r+s)}=2\lambda^{-r}-1$ and simplifying gives $(\lambda^r-1)^2=0$, so $\lambda^r=1$. Similarly, multiplying the equations $2\lambda^r=1+\lambda^{r+s}$ and $2\lambda^{-r}=1+\lambda^{-(r+s)}$ and simplifying gives $(\lambda^{r+s}-1)^2=0$, so $\lambda^{r+s}=1$ and hence $\lambda^s=1$. Therefore we have $\lambda^r=1,\lambda^n=1,\lambda^s=1$, or equivalently
$(\lambda^d)^R=1,(\lambda^d)^N=1,(\lambda^d)^S=1$. But $(R,N,S)=1$ so $\lambda^d=1$, contradicting the fact that $\lambda$ is a root of $F(t)$.
Therefore $F(t),G(t)$ have no common roots so $(F(t),G(t))=1$, as required.
\end{proof}

\begin{proof}[Proof of Corollary~\ref{cor:finitelymany}]
Let $r\geq 1$. By~(\ref{eq:detC}) if $H(r,n_0,s)^\mathrm{ab}=1$ for some $n_0$ then $f(1)=\pm 1$ (where $f$ is as given at~(\ref{eq:fforHrns})) and so $|r-s|=1$ and in particular $(r,s)=1$ and there are at most two possible values of $s$, namely $s=r\pm 1$. If $f(t)$ has a cyclotomic factor $\Phi_{n_0}(t)$, then this is also a factor of $g(t)=t^{n_0}-1$ so $\mathrm{deg}((f(t),g(t))>0$.  But $(r,n_0,s)=(r,n_0,r\pm 1)=1$ so Theorem~\ref{thm:betti}(a) (or~\cite[Theorem~1]{CampbellRobertson75}) gives that $\beta(H(r,n_0,s)^\mathrm{ab})=0$ and so by~(\ref{eq:betti}) we have $\mathrm{deg}(f(t),g(t))=0$, a contradiction. Therefore $f(t)$ has no cyclotomic factors. Since also $f(0)\neq 0$, Theorem~1 of~\cite{Cremona08} implies that the set of integers $n>r$ such that the relation matrix of the presentation $H(r,n,s)$ has determinant equal to $1$ or $-1$ is finite. Therefore the set of integers $n$ for which $H(r,n,s)^\mathrm{ab}=1$ is finite.
\end{proof}

\section{Minimum number of generators for $H(r,n,s)^\mathrm{ab}$}\label{sec:generatornumber}

\begin{lemma}\label{lem:mapstoZ2^t}
The minimum number of generators
\[d(H(r,n,s)^\mathrm{ab})\geq \begin{cases}
  (n,r+s)& \mathrm{if}~(r+s)/(n,r+s)~\mathrm{is~even},\\
  (n,r+s)-1& \mathrm{if}~(r+s)/(n,r+s)~\mathrm{is~odd}.
\end{cases}\]
Hence if $H(r,n,s)$ is a connected LOG group then $(n,r+s)=1$ if $(r+s)/(n,r+s)$ is even and $(n,r+s)\leq 2$ if $(r+s)/(n,r+s)$ is odd.
\end{lemma}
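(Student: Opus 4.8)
The plan is to obtain the lower bound by reducing modulo~$2$. For any finitely generated abelian group $A$ and any prime $p$ one has $d(A)\geq \dim_{\mathbb{F}_p}(A\otimes\mathbb{F}_p)$, since $A\otimes\mathbb{F}_p=A/pA$ is a quotient of $A$ and is spanned over $\mathbb{F}_p$ by the images of any generating set of $A$. I would apply this with $p=2$, which is what forces the elementary abelian $2$-groups into the picture and explains the name of the lemma.

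The next step is to evaluate $\dim_{\mathbb{F}_2}(H(r,n,s)^\mathrm{ab}\otimes\mathbb{F}_2)$. As $H(r,n,s)^\mathrm{ab}$ is the cokernel of the integer circulant relation matrix $C$, its reduction is the cokernel of the mod-$2$ reduction $\bar C$ of $C$, which is again a circulant over $\mathbb{F}_2$, with representer polynomial the reduction of $f$. Since the rank identity~(\ref{eq:rank}) is valid over an arbitrary field, applying it over $\mathbb{F}_2$ gives $\dim_{\mathbb{F}_2}(H(r,n,s)^\mathrm{ab}\otimes\mathbb{F}_2)=n-\mathrm{rank}_{\mathbb{F}_2}(\bar C)=\deg(\gcd(f(t),g(t)))$, the gcd now taken in $\mathbb{F}_2[t]$. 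Over $\mathbb{F}_2$ the signs in~(\ref{eq:fforHrns}) disappear, so $f(t)=1+t+\cdots+t^{r+s-1}$ and hence $(t+1)f(t)=t^{r+s}+1$, while $g(t)=t^n+1$. Writing $m=r+s$ and $e=(n,m)$, the classical identity $\gcd(t^m+1,t^n+1)=t^{\,e}+1$ over $\mathbb{F}_2$ controls $\gcd((t+1)f,g)$, and it remains to recover $\deg\gcd(f,g)$ from this.

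I would do so by comparing $(t+1)$-adic valuations $v_{t+1}(\cdot)$. Writing $n=2^a n'$ and $m=2^b m'$ with $n',m'$ odd, one has $v_{t+1}(g)=2^a$ and $v_{t+1}(t^m+1)=2^b$, whence $v_{t+1}(f)=2^b-1$; at every irreducible $p\neq t+1$ the factor $t+1$ is invisible, so $\gcd(f,g)$ and $\gcd((t+1)f,g)$ have equal $p$-valuation there. Thus $\deg\gcd(f,g)=e-\min(2^a,2^b)+\min(2^b-1,2^a)$, and a short case split gives $e-1$ when $b\leq a$ and $e$ when $b>a$. Since $m/e=(r+s)/(n,r+s)$ is even precisely when $b>a$, this matches the two cases of the claimed bound. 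The main obstacle I anticipate is exactly this valuation bookkeeping at $t+1$: because we work in characteristic~$2$, both $g$ and $t^m+1$ have $t+1$ as a repeated root of multiplicities $2^a$ and $2^b$, so the extra factor $t+1$ relating $f$ to $t^m+1$ shifts $\deg\gcd(f,g)$ by either $0$ or $1$ according to the comparison of these $2$-adic valuations. This $\pm1$ discrepancy is the source of the two cases, and handling the boundary case $b=a$ correctly is where care is needed.

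Finally, for the ``Hence'' part, a connected LOG group $G$ has $G^\mathrm{ab}\cong\Z$, so $d(G^\mathrm{ab})=1$. If $(r+s)/(n,r+s)$ is even the bound reads $1\geq (n,r+s)$, forcing $(n,r+s)=1$; if it is odd the bound reads $1\geq (n,r+s)-1$, forcing $(n,r+s)\leq 2$.
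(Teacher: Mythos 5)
Your proof is correct, but it takes a genuinely different route from the paper's. The paper argues at the level of presentations: following an idea of Campbell and Robertson, it adjoins the relations $x_i^2=1$, deduces from the resulting relation $x_ix_{i+1}\cdots x_{i+r+s-1}=1$ that $x_i=x_{i+\delta}$ where $\delta=(n,r+s)$, and collapses the quotient to a free product of $\kappa$ copies of $\Z_2$ (with $\kappa=\delta$ or $\delta-1$ according to the parity of $(r+s)/\delta$), so that $H(r,n,s)^{\mathrm{ab}}$ maps onto $\Z_2^\kappa$. You instead bound $d(A)$ below by $\dim_{\mathbb{F}_2}(A/2A)$ and compute that dimension as $\deg\gcd(f(t),g(t))$ in $\mathbb{F}_2[t]$ via the circulant rank formula~(\ref{eq:rank}) applied in characteristic $2$; your bookkeeping checks out: writing $n=2^an'$, $r+s=2^bm'$ with $n',m'$ odd, one has $v_{t+1}(g)=2^a$ and $v_{t+1}(f)=2^b-1$ (from $(t+1)f=t^{r+s}+1$), the identity $\gcd(t^{r+s}+1,t^n+1)=t^e+1$ with $e=(n,r+s)$ handles all other irreducible factors, and $\deg\gcd(f,g)$ comes out as $e-1$ when $b\le a$ and $e$ when $b>a$, which matches since $(r+s)/e$ is even precisely when $b>a$. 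The one point requiring care is your appeal to~(\ref{eq:rank}) over $\mathbb{F}_2$: the naive proof by diagonalizing over $\mathbb{C}$ breaks down because $t^n-1$ is inseparable when $2\mid n$, but Ingleton's result cited in the paper is stated for arbitrary fields, and in any case the formula follows directly by viewing the circulant as multiplication by $f$ on $\mathbb{F}_2[t]/(t^n-1)$, whose kernel has dimension $\deg\gcd(f,g)$ --- so this is a citable fact, not a gap. Comparing the two: the paper's quotient argument is more elementary and exhibits an explicit quotient group before abelianization, while your argument extends the paper's own circulant machinery from $\mathbb{Q}$ to $\mathbb{F}_2$ and actually computes the exact mod-$2$ rank of $H(r,n,s)^{\mathrm{ab}}$, showing the paper's lower bound is the best obtainable from $2$-torsion information; both yield the identical numerical bound, and your derivation of the ``Hence'' clause from $d(G^{\mathrm{ab}})=1$ coincides with the paper's.
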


\begin{proof}
Using an idea from the proof of~\cite[Lemma~4]{CampbellRobertson75}, we see that the group $H(r,n,s)^\mathrm{ab}$ maps onto
\begin{alignat*}{1}
Q
&=\pres{x_0,\ldots,x_{n-1}}{\prod_{j=0}^{r-1} x_{i+j}=\prod_{j=0}^{s-1}x_{i+j+r},\ x_i^2\ (0\leq i<n)}^\mathrm{ab}\\
&=\pres{x_0,\ldots,x_{n-1}}{\prod_{j=0}^{r+s-1} x_{i+j},\ x_i^2\ (0\leq i<n)}^\mathrm{ab}.
\end{alignat*}
The remainder of the proof is similar to that of~\cite[Theorem~C]{WilliamsSemigroups}. Let $\delta=(n,r+s)$. Then there exist $p, q \in\Z$ such that $\delta=p(r+s) + qn$ so $\delta\equiv p(r+s)$~mod~$n$. The relation
$x_ix_{i+1}\ldots x_{i+r+s-1}= 1$ implies $x_i(x_{i+1} \ldots x_{i+r+s}) = x_{i+r+s}$ so $x_i = x_{i+r+s}$ and hence
$x_i = x_{i+r+s} = x_{i+2(r+s)} = \ldots = x_{i+p(r+s)}$. But $x_{i+p(r+s)} = x_{i+\delta}$ so we have $x_i = x_{i+\delta}$
for each $0\leq i<n$. Eliminating generators $x_\delta, \ldots, x_{n-1}$ gives
\begin{alignat*}{1}
Q
&=\pres{x_0,\ldots,x_{\delta-1}}{(x_ix_{i+1}\ldots x_{i+\delta-1})^{(r+s)/\delta}, x_i^2\ (0\leq i<\delta)}^\mathrm{ab}\\
&=\pres{x_0,\ldots,x_{\delta-1}}{(x_0x_{1}\ldots x_{\delta-1})^{(r+s)/\delta}, x_i^2\ (0\leq i<\delta)}^\mathrm{ab}\\
&=\pres{x_0,\ldots,x_{\delta-1},y}{y^{(r+s)/\delta},   y=x_0x_{1}\ldots x_{\delta-1}, x_i^2\ (0\leq i<\delta)}^\mathrm{ab}\\
&=\pres{x_0,\ldots,x_{\delta-2},y}{y^{(r+s)/\delta},   y^2, x_i^2\ (0\leq i<\delta-1)}^\mathrm{ab}\\
&=\pres{x_0,\ldots,x_{\delta-2},y}{y^{((r+s)/\delta,2)}, x_i^2\ (0\leq i<\delta-1)}^\mathrm{ab}\\
&\cong \underbrace{\Z_2*\cdots*\Z_2}_{\kappa}
\end{alignat*}
where $\kappa=\delta$ if $(r+s)/\delta$ is even and $\kappa=\delta-1$ otherwise. Hence $H(r,n,s)^\mathrm{ab}$ maps onto $\Z_2^\kappa$ so $d(H(r,n,s)^\mathrm{ab})\geq \kappa$. If $H(r,n,s)$ is a connected LOG group then $d(H(r,n,s)^\mathrm{ab})=1$, and the result follows.
\end{proof}

In connection with Conjecture~\ref{conj:noperfect} we record the following:

\begin{corollary}\label{cor:necperfectconds}
If $H(r,n,s)$ is perfect then $|r-s|=1$ and $(n,r+s)=1$.
\end{corollary}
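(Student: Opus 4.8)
The plan is to read off the two required conditions from two different invariants of $H^{\mathrm{ab}}$, where $H=H(r,n,s)$: its order, which will yield $|r-s|=1$, and its minimum number of generators, which will yield $(n,r+s)=1$. Throughout, perfectness is used in the form $H^{\mathrm{ab}}=1$, so that $H^{\mathrm{ab}}$ is finite with $|H^{\mathrm{ab}}|=1$ and $d(H^{\mathrm{ab}})=0$.

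First I would bound $|r-s|$. Let $C$ be the circulant relation matrix of $H(r,n,s)$, with representer polynomial $f$ as in~(\ref{eq:fforHrns}). Since $H^{\mathrm{ab}}$ is finite, $\det C\neq 0$ and~(\ref{eq:detC}) gives $|\det C|=|H^{\mathrm{ab}}|=1$. Writing $g(t)=t^n-1=\prod_{m\mid n}\Phi_m(t)$ and using $\det C=\pm\prod_{g(\lambda)=0}f(\lambda)=\pm\prod_{m\mid n}\mathrm{Res}(\Phi_m,f)$, I observe that the factor corresponding to $m=1$ is $\mathrm{Res}(t-1,f)=\pm f(1)=\pm(r-s)$, an integer dividing $\det C$. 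Hence $|r-s|$ divides $|\det C|=1$. In particular $r\neq s$ (if $r=s$ then $f(1)=0$ forces $\det C=0$, contradicting $|\det C|=1$) and so $|r-s|=1$. This is exactly the divisibility already exploited in the proof of Corollary~\ref{cor:finitelymany}, so no new input is needed here.

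Next I would deduce $(n,r+s)=1$ from Lemma~\ref{lem:mapstoZ2^t}. Since $|r-s|=1$, exactly one of $r,s$ is even, so $r+s$ is odd; consequently every divisor of $r+s$ is odd and in particular the quotient $(r+s)/(n,r+s)$ is odd. We are therefore in the second case of Lemma~\ref{lem:mapstoZ2^t}, which gives $d(H^{\mathrm{ab}})\geq (n,r+s)-1$. As $H^{\mathrm{ab}}=1$ we have $d(H^{\mathrm{ab}})=0$, whence $(n,r+s)\leq 1$ and therefore $(n,r+s)=1$, as claimed.

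The substantive step is the divisibility $f(1)\mid\det C$ underlying $|r-s|=1$; once this is in hand, the parity of $r+s$ selects the correct branch of Lemma~\ref{lem:mapstoZ2^t} and the remaining inequalities are immediate. The only point requiring care is confirming that the relevant quotient is odd, so as to land in the second branch of the lemma rather than its first branch (the latter being vacuous anyway, since $(n,r+s)\geq 1$ could never satisfy $0\geq (n,r+s)$).
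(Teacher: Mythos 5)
Your proposal is correct and takes essentially the same route as the paper: perfectness forces $f(1)=\pm1$ (hence $|r-s|=1$) via the circulant determinant, and Lemma~\ref{lem:mapstoZ2^t} then rules out $(n,r+s)>1$. Your parity detour to select the odd branch of the lemma is harmless but unnecessary, since whenever $(n,r+s)\geq 2$ either branch already yields $d(H^{\mathrm{ab}})\geq 1$, contradicting perfectness.
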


\begin{proof}
If $H(r,n,s)^\mathrm{ab}$ is perfect then $f(1)=\pm 1$, where $f$ is the representer polynomial for $H(r,n,s)$ given at~(\ref{eq:fforHrns}); that is $|r-s|=1$. By Lemma~\ref{lem:mapstoZ2^t} if $(n,r+s)>1$ then $H(r,n,s)$ is not perfect.
\end{proof}

\begin{remark}\label{rem:searchforperfect}
\em When $|r-s|=1$ and $(n,r+s)=1$, computer experiments using GAP~\cite{GAP} indicate that the order $|H(r,n,s)^\mathrm{ab}|$ is often a product of large primes so straightforward quotient methods, such as those employed in the proof of Lemma~\ref{lem:mapstoZ2^t}, are unlikely to suffice for proving Conjecture~\ref{conj:noperfect} in general. Since $|F(2,n)^\mathrm{ab}|=|H(2,n,1)^\mathrm{ab}|$ is increasing in $n$ (e.g.~\cite{Conway67}), one might hope to be able to prove Conjecture~\ref{conj:noperfect} by showing that for any $r\geq 2$ the order $|H(r,n,r-1)^\mathrm{ab}|$ is increasing in $n$; however, this is not the case since, for example, $|H(3,5,2)^\mathrm{ab}|=16$ and $|H(3,6,2)^\mathrm{ab}|=13$.
\end{remark}

We also note the following corollary to Lemma~\ref{lem:mapstoZ2^t} which generalizes~\cite[Lemma~4]{CampbellRobertson75} (which deals with the case $r+s\equiv 0$~mod~$n$). It follows immediately from Theorem~9(i) of~\cite{JWW74} which states that if a group $G$ defined by a balanced presentation is finite then $d(G^\mathrm{ab})\leq 3$.

\begin{corollary}\label{cor:improvesCRLem4}
If either
\begin{itemize}
  \item[(a)] $(r+s)/(n,r+s)$ is even and $(n,r+s)\geq 4$; or
  \item[(b)] $(r+s)/(n,r+s)$ is odd and $(n,r+s)\geq 5$  
\end{itemize}
then $H(r,n,s)$ is infinite.
\end{corollary}

\begin{lemma}\label{lem:r-s=2easy}
Let $r,s\geq 1$, $n\geq 2$ and suppose that $(r,n,s)=2$. If $|r-s|\neq 2$ then $d(H(r,n,s)^\mathrm{ab})\geq 2$, and hence $H(r,n,s)$ is not a connected LOG group.
\end{lemma}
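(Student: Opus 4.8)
The plan is to read off the abelianization as the cokernel of the circulant relation matrix $C$ of $H=H(r,n,s)$ and to detect torsion by reducing modulo a well-chosen prime, applying the rank identity (\ref{eq:rank}) over $\mathbb{F}_p$ rather than over $\mathbb{Q}$. Since $H(r,n,s)\cong H(s,n,r)$ I may assume $s\geq r$, and since $(r,n,s)=2$ all of $r,n,s$ are even. If $r=s$ then $(r,n)=(r,n,s)=2$, so Theorem~\ref{thm:betti}(b) gives $\beta(H^\mathrm{ab})=2$ and hence $d(H^\mathrm{ab})\geq 2$ immediately; so I assume $s>r$, in which case $s-r$ is a positive even integer and the hypothesis $|r-s|\neq 2$ forces $s-r\geq 4$.

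For the main argument I use that $d(A)\geq \dim_{\mathbb{F}_p}(A/pA)$ for every finitely generated abelian group $A$ and prime $p$, together with the fact that $H^\mathrm{ab}/pH^\mathrm{ab}$ is the cokernel of $C$ reduced mod~$p$. By the identity (\ref{eq:rank}), which is valid over any field, $\mathrm{rank}_{\mathbb{F}_p}(C)=n-\deg(\gcd(f(t),g(t)))$ computed in $\mathbb{F}_p[t]$, so $\dim_{\mathbb{F}_p}(H^\mathrm{ab}/pH^\mathrm{ab})=n-\mathrm{rank}_{\mathbb{F}_p}(C)=\deg(\gcd(f(t),g(t)))$, where $f$ is the representer polynomial (\ref{eq:fforHrns}) and $g(t)=t^n-1$. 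The key computation is that $f(1)=r-s$ and $f(-1)=0$ (the latter because $r$ and $s$ are even), while $g(1)=0$ and $g(-1)=(-1)^n-1=0$ since $n$ is even. Consequently, if $p$ is an odd prime dividing $s-r$ then $t-1$ and $t+1$ both divide $f$ and $g$ in $\mathbb{F}_p[t]$; as $p$ is odd these two linear factors are coprime, so $(t-1)(t+1)$ divides $\gcd(f,g)$ and its degree is at least $2$. Thus $d(H^\mathrm{ab})\geq 2$ whenever $s-r$ has an odd prime factor.

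It remains to treat the case where $s-r$ is a power of $2$; then $s-r\geq 4$ gives $s-r\equiv 0\pmod 4$, and since $r$ is even, $r+s\equiv s-r\equiv 0\pmod 4$. Here I would apply Lemma~\ref{lem:mapstoZ2^t} with $\delta=(n,r+s)$, which is even (both $n$ and $r+s$ are even) and so $\delta\geq 2$. If $(r+s)/\delta$ is even the lemma gives $d(H^\mathrm{ab})\geq\delta\geq 2$; if $(r+s)/\delta$ is odd, then comparing the $2$-adic valuations on the two sides of $r+s=\delta\cdot\big((r+s)/\delta\big)$ and using $4\mid(r+s)$ forces $4\mid\delta$, whence $\delta\geq 4$ and the lemma gives $d(H^\mathrm{ab})\geq\delta-1\geq 3$. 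In every case $d(H^\mathrm{ab})\geq 2$, and since a connected LOG group $G$ satisfies $d(G^\mathrm{ab})=1$, the group $H(r,n,s)$ is not a connected LOG group.

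I expect the main obstacle to be precisely the subcase $(n,r+s)=2$ with $(r+s)/2$ odd (equivalently $s-r\equiv 2\pmod 4$), where the $\mathbb{Z}_2$-quotient of Lemma~\ref{lem:mapstoZ2^t} only yields $d\geq 1$ and so is insufficient. The resolution is the odd-prime argument of the second paragraph, which applies because $s-r\equiv 2\pmod 4$ together with $s-r\geq 4$ writes $s-r=2m$ with $m$ odd and $m\geq 3$, guaranteeing an odd prime factor. This is exactly where the hypothesis $|r-s|\neq 2$ is essential: when $s-r=2$ the integer has no odd prime factor and the two guaranteed common roots $t=\pm1$ no longer suffice, consistent with $H(r,n,s)$ being permitted to equal $\mathbb{Z}$ in case~(b) of Theorem~\ref{thm:connLOGclassification}.
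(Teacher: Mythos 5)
Your proof is correct, but it takes a genuinely different and substantially heavier route than the paper's. The paper disposes of all cases at once with a single elementary quotient: since $n,r,s$ are all even, adding the relations $x_{2j}=x_0$, $x_{2j+1}=x_1$ collapses every defining relation to $(x_0x_1)^{(r-s)/2}=1$, so $H(r,n,s)^\mathrm{ab}$ surjects onto $\Z_{|r-s|/2}\oplus\Z$, which is non-cyclic exactly when $|r-s|\neq 2$ (the case $r=s$ giving $\Z\oplus\Z$) --- no case analysis and no appeal to Theorem~\ref{thm:betti} or Lemma~\ref{lem:mapstoZ2^t}. You instead split three ways: Theorem~\ref{thm:betti}(b) when $r=s$; a mod-$p$ rank computation when $s-r$ has an odd prime factor $p$; and Lemma~\ref{lem:mapstoZ2^t} plus a $2$-adic valuation count when $s-r$ is a power of $2$ at least $4$. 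All three steps check out: your claim that (\ref{eq:rank}) holds over any field is true (view $C$ as multiplication by $f$ on $\mathbb{F}_p[t]/(t^n-1)$, whose image is $h\mathbb{F}_p[t]/(t^n-1)$ with $h=\gcd(f,g)$, of dimension $n-\deg h$; this one-line justification is worth recording, since the paper only uses the formula in characteristic zero, though Ingleton's cited result does cover all fields); the evaluations $f(\pm1)$, the congruence $r+s\equiv s-r\pmod 4$, and the deduction $4\mid\delta$ when $(r+s)/\delta$ is odd are all correct; and your diagnosis of the delicate subcase $s-r\equiv 2\pmod 4$, where Lemma~\ref{lem:mapstoZ2^t} alone fails, is accurate. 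It is worth noticing that the paper's parity quotient is precisely the integral form of your evaluation at $t=\pm1$: killing $x_{i+2}x_i^{-1}$ reduces the relation module modulo $t^2-1$, and the resulting $\Z_{|r-s|/2}\oplus\Z$ simultaneously carries the odd-prime torsion you extract mod $p$ and the $2$-torsion you had to recover from Lemma~\ref{lem:mapstoZ2^t}. What your approach buys is a genuinely reusable tool --- mod-$p$ Betti numbers of cyclically presented groups computed as $\deg\gcd(f,g)$ in $\mathbb{F}_p[t]$, in the spirit of (\ref{eq:betti}); what the paper's buys is brevity and self-containment.
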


\begin{proof}
The abelianisation $H(r,n,s)^\mathrm{ab}$ maps onto
\begin{alignat*}{1}
A
&=\onetwopres{x_0,\ldots,x_{n-1}}
{\prod_{j=0}^{r-1}x_{i+j}=\prod_{j=0}^{s-1}x_{i+j+r}\ (0\leq i<n)}
{x_{2j}=x_0, x_{2j+1}=x_1\ (0\leq j<n/2)}^\mathrm{ab}
\\
&=\pres{x_0,x_1}{(x_0x_1)^{(r-s)/2}}^\mathrm{ab}\\
&\cong \Z_{|r-s|/2}\oplus \Z
\end{alignat*}
which is non-cyclic if $|r-s|\neq 2$, and hence $d(H(r,n,s)^\mathrm{ab})\geq 2$.
\end{proof}

\pagebreak

\begin{lemma}\label{lem:r-s=2hard}
Let $r,s\geq 1$, $n\geq 2$ and suppose that $(r,n,s)=2$ and $|r-s|=2$.
\begin{itemize}
  \item[(a)] If $H(r/2,n/2,s/2)^\mathrm{ab}\neq 1$ then $d(H(r,n,s)^\mathrm{ab})\geq 2$.
  \item[(b)] The group $H(r,n,s)$ is not a 2-generator knot group.
\end{itemize}
\end{lemma}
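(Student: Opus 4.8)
The plan is to set $R=r/2$, $N=n/2$, $S=s/2$ (so $(R,N,S)=1$ and $|R-S|=1$) and to compare $H(r,n,s)$ with $H':=H(R,N,S)$. Since $r,s$ are even, the representer polynomial~(\ref{eq:fforHrns}) factors as $f(t)=(1+t)f'(t^2)$, where $f'$ is the representer polynomial of $H'$; moreover, by Theorem~\ref{thm:betti}(a), $\beta(H'^{\mathrm{ab}})=(R,N,S)-1=0$, so $H'^{\mathrm{ab}}$ is finite. The tool for (a) is that $d(A)\geq\dim_{\mathbb{F}_p}(A\otimes\mathbb{F}_p)$ for every prime $p$ and, by reducing~(\ref{eq:rank})--(\ref{eq:betti}) mod $p$, $\dim_{\mathbb{F}_p}(H(r,n,s)^{\mathrm{ab}}\otimes\mathbb{F}_p)=\deg\gcd(\bar f,t^n-1)$ computed over $\mathbb{F}_p$.

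For (a), suppose $H'^{\mathrm{ab}}\neq1$ and fix a prime $p\mid|H'^{\mathrm{ab}}|$; then $\bar f'(u)$ and $u^N-1$ share a root $\alpha\in\overline{\mathbb{F}_p}$, and $\bar f'(1)=R-S=\pm1$ forces $\alpha\neq1$. I would then exhibit two distinct common roots of $\bar f$ and $t^n-1$. For odd $p$, take $t=-1$ (a root of $\bar f$ since $1+t\mid f$, and $(-1)^n=1$) and $t=\beta$ with $\beta^2=\alpha$ (so $\bar f(\beta)=(1+\beta)\bar f'(\alpha)=0$ and $\beta^n=\alpha^N=1$); these differ because $\beta=-1$ would give $\alpha=1$. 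For $p=2$, Frobenius gives $\bar f(t)=(1+t)\bar f'(t)^2$, so $t=\alpha$ and $t=1$ are two distinct common roots. Hence $\deg\gcd(\bar f,t^n-1)\geq2$ and $d(H(r,n,s)^{\mathrm{ab}})\geq2$.

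For (b), a $2$-generator knot group is a connected LOG group, so $H:=H(r,n,s)$ would satisfy $H^{\mathrm{ab}}\cong\Z$, i.e. $d(H^{\mathrm{ab}})=1$; by part~(a) this forces $H'$ to be perfect. The plan is to analyse the double cover determined by $H\to H^{\mathrm{ab}}=\Z\to\Z/2$, equivalently the sign-twisted homology $H_1(H;\Z_-)$ (with $x_i\mapsto-1$), which for a knot group is the first homology $H_1(\Sigma_2)$ of the double branched cover. Because $r,s$ are even, each Fox derivative $\partial R_0/\partial x_j$ evaluates to $\pm1$, and one checks that $H_1(H;\Z_-)$ is the torsion subgroup of the cokernel of the integer circulant $C_-$ with representer polynomial $f_-(t)=(1-2t^r+t^{r+s})/(1+t)=(1-t)f'(t^2)$. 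Setting $u=t^2$ and viewing $\Z[t]/(t^n-1)$ as a free rank-two module over $\Z[u]/(u^N-1)$ then yields the structural identity
\[ H_1(H;\Z_-)\ \cong\ H'^{\mathrm{ab}}\ \oplus\ \mathrm{tors}\!\left(\Z[u]/(u^N-1,\,1-2u^R+u^{R+S})\right), \]
so $H(r/2,n/2,s/2)^{\mathrm{ab}}$ is a direct summand of $H_1(\Sigma_2)$. Since a $2$-generator knot group has cyclic Alexander module, hence cyclic $H_1(\Sigma_2)$, this rules out a $2$-generator knot group whenever $H'^{\mathrm{ab}}$ is non-cyclic, and, with part~(a), settles every case in which $H'$ is not perfect.

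The main obstacle is the remaining case, in which $H'$ is perfect. (If in addition $r\equiv0$ or $s\equiv0$ mod $n$ then $H\cong\Z$, which is $1$-generated and so not a $2$-generator knot group; the genuine difficulty is the configuration of Theorem~\ref{thm:connLOGclassification}(c).) Here $H'^{\mathrm{ab}}=1$ and the identity above collapses completely: $f'$ becomes a unit in $\Z[u]/(u^N-1)$, the torsion term vanishes, and $H_1(\Sigma_2)=0$, so every abelian and metabelian invariant of the putative knot is trivial. Ruling out a $2$-generator knot group here therefore cannot be homological; I would attempt it either through a non-abelian study of the double branched cover—aiming to force $\pi_1(\Sigma_2)$ cyclic, hence $\Sigma_2=S^3$ and the knot trivial, contradicting $H\not\cong\Z$—or through an Euler-characteristic argument: if the cyclic presentation is aspherical then $H_2(H)\neq0$, which is impossible for knot groups. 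I expect this perfect subcase to be the crux, since it is exactly the situation predicted never to occur by Conjecture~\ref{conj:noperfect}.
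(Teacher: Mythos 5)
Your part~(a) is correct, and it is a genuinely different argument from the paper's: you bound $d(H(r,n,s)^{\mathrm{ab}})$ below by $\dim_{\mathbb{F}_p}(H(r,n,s)^{\mathrm{ab}}\otimes\mathbb{F}_p)=\deg\gcd(\bar f,t^n-1)$ using the factorization $f(t)=(1+t)f'(t^2)$ and the rank formula~(\ref{eq:rank}) over $\mathbb{F}_p$ (valid in all characteristics by Ingleton, including $p\mid n$), and your two exhibited common roots ($-1$ and a square root of $\alpha$ for odd $p$; $1$ and $\alpha$ via Frobenius for $p=2$) are distinct for exactly the reasons you give. The paper instead proves (a) by an explicit sequence of Tietze transformations: pairing $y_j=x_{2j}x_{2j+1}$, $z_j=x_{2j+1}x_{2j+2}$ and collapsing all $x_{2j}$ to $x_0$ yields an epimorphism $H(r,n,s)\twoheadrightarrow \Z * H(r/2,n/2,s/2)$, whence $H(r,n,s)^{\mathrm{ab}}$ maps onto $\Z\oplus H(r/2,n/2,s/2)^{\mathrm{ab}}$. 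Your method buys a computation that never leaves linear algebra; the paper's buys a \emph{non-abelian} structural fact --- and that difference is precisely where your part~(b) fails.

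For (b) you have a genuine gap, and you have named it yourself: your argument does not handle the case $H':=H(r/2,n/2,s/2)$ perfect, which you leave to speculative strategies (forcing $\pi_1(\Sigma_2)$ cyclic; an asphericity/Euler-characteristic argument that would require asphericity you cannot establish). Your intermediate step is also shaky: the claim that a 2-generator knot group has cyclic Alexander module would follow from a 2-generator, 1-relator presentation, but the existence of such a presentation for an arbitrary 2-generated deficiency-1 group is not automatic --- though this step is redundant anyway, since $H'^{\mathrm{ab}}\neq 1$ is already excluded by part~(a). The paper's epimorphism onto $\Z * H'$ settles the perfect case outright, with no homology at all: if $H'$ is perfect and nontrivial then $H'$ is non-cyclic, so by Grushko $d(\Z * H')=1+d(H')\geq 3$, and hence $H(r,n,s)$ is not even 2-generated; and if $H'=1$, then in the intermediate presentation~(\ref{eq:amalgfp}) the $y_j$ and $z_j$ satisfy the relations of the trivial group $H(R,N,R-1)$, so $y_j=z_j=1$ and $H(r,n,s)\cong\pres{x_{2j}}{x_{2j}=x_{2j+2}}\cong\Z$, which is not a 2-generator knot group. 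Your homological framework is structurally unable to reach this conclusion --- as you observe, in the perfect case every abelian invariant you compute vanishes --- whereas the free-product target is a non-abelian invariant that survives exactly there. The missing idea, in short, is the surjection $H(r,n,s)\twoheadrightarrow\Z * H(r/2,n/2,s/2)$ together with its degenerate form $H(r,n,s)\cong\Z$ when $H(r/2,n/2,s/2)=1$; contrary to your closing expectation, (b) does not require resolving Conjecture~\ref{conj:noperfect}.
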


\begin{proof}
Since $H(r,n,s)\cong H(s,n,r)$ we may assume $r>s$, and so $r=s+2$. Let $R=r/2,N=n/2$ and let $H=H(r,n,s)=H(2R,2N,2R-2)$. Then
\begin{alignat}{1}
H&=\pres{x_i}{\prod_{\alpha=0}^{2R-1}x_{i+\alpha}=\prod_{\alpha=0}^{2R-3}x_{i+2R+\alpha}\ (0\leq i<2N)}\nonumber\\
&=\onetwopres{x_i}{\prod_{\beta=0}^{R-1}x_{i+2\beta}x_{i+2\beta+1}}{\quad =\prod_{\beta=0}^{R-2}x_{i+2(R+\beta)}x_{i+2(R+\beta)+1}\ (0\leq i< 2N)}\nonumber\\
&=\twothreepres{x_{2j},}{x_{2j+1}}
{\prod_{\beta=0}^{R-1}x_{2(j+\beta)}x_{2(j+\beta)+1}=\prod_{\beta=0}^{R-2}x_{2(j+R+\beta)}x_{2(j+R+\beta)+1}}
{\prod_{\beta=0}^{R-1}x_{2(j+\beta)+1}x_{2(j+\beta+1)}}{\quad =\prod_{\beta=0}^{R-2}x_{2(j+R+\beta)+1}x_{2(j+R+\beta+1)}\ (0\leq j<N)}\nonumber\\
&=\threethreepres{x_{2j},}{x_{2j+1},}{y_j,z_j}
{\prod_{\beta=0}^{R-1}y_{j+\beta}=\prod_{\beta=0}^{R-2}y_{j+R+\beta},}
{\prod_{\beta=0}^{R-1}z_{j+\beta}=\prod_{\beta=0}^{R-2}z_{j+R+\beta},}
{y_j=x_{2j}x_{2j+1}, z_j=x_{2j+1}x_{2j+2}\ (0\leq j<N)}\nonumber\\
&=\threethreepres{x_{2j},}{x_{2j+1},}{y_j,z_j}
{\prod_{\beta=0}^{R-1}y_{j+\beta}=\prod_{\beta=0}^{R-2}y_{j+R+\beta},}
{\prod_{\beta=0}^{R-1}z_{j+\beta}=\prod_{\beta=0}^{R-2}z_{j+R+\beta},}
{y_j=x_{2j}x_{2j+1}, x_{2j+1}=z_jx_{2j+2}^{-1}\ (0\leq j<N)}\nonumber\\
&=\twothreepres{x_{2j},}{y_j,z_j}
{\prod_{\beta=0}^{R-1}y_{j+\beta}=\prod_{\beta=0}^{R-2}y_{j+R+\beta},}
{\prod_{\beta=0}^{R-1}z_{j+\beta}=\prod_{\beta=0}^{R-2}z_{j+R+\beta},}
{y_j=x_{2j}z_jx_{2j+2}^{-1}\ (0\leq j<N)}\label{eq:amalgfp}\\
&=\twothreepres{x_{2j},}{z_j}
{\prod_{\beta=0}^{R-1} x_{2(j+\beta)}z_{j+\beta}x_{2(j+\beta)+2}^{-1}}{\quad = \prod_{\beta=0}^{R-2}x_{2(j+R+\beta)}z_{j+R+\beta}x_{2(j+R+\beta)+2}^{-1},}
{\prod_{\beta=0}^{R-1}z_{j+\beta}=\prod_{\beta=0}^{R-2}z_{j+R+\beta}\ (0\leq j<N)}\nonumber
\\
&=\twothreepres{x_{2j},}{z_j}
{x_{2j}\left(\prod_{\beta=0}^{R-1} z_{j+\beta}\right)x_{2(j+R)}^{-1}}{\quad = x_{2(j+R)}\left(\prod_{\beta=0}^{R-2}z_{j+R+\beta}\right)x_{2(j+2R-1)}^{-1},}
{\prod_{\beta=0}^{R-1}z_{j+\beta}=\prod_{\beta=0}^{R-2}z_{j+R+\beta}\ (0\leq j<N)}.\nonumber
\\
\intertext{By setting each $x_{2j}=x_0$ we have that $H$ maps onto}
Q
&=\pres{x_0,z_j}
{\prod_{\beta=0}^{R-1}z_{j+\beta}=\prod_{\beta=0}^{R-2}z_{j+R+\beta}\ (0\leq j<N)}\nonumber
\\
&\cong \pres{x_0}{}*\pres{z_j}
{\prod_{\beta=0}^{R-1}z_{j+\beta}=\prod_{\beta=0}^{R-2}z_{j+R+\beta}\ (0\leq j<N)}\nonumber
\\
&\cong \Z*H(R,N,R-1)=\Z*H(r/2,n/2,s/2).\nonumber
\end{alignat}
Therefore $H(r,n,s)^\mathrm{ab}$ maps onto $\Z\oplus H(r/2,n/2,s/2)^\mathrm{ab}$, so if \linebreak $H(r/2,n/2,s/2)^\mathrm{ab}\neq 1$ this is non-cyclic so $d(H(r,n,s)^\mathrm{ab})\geq 2$, proving part~(a). For part~(b), suppose for contradiction that $H(r,n,s)$ is a 2-generator knot group; then in particular $d(H(r,n,s)^\mathrm{ab})=1$ so by part~(a) we have that $H(r/2,n/2,s/2)^\mathrm{ab}=1$. If $H(r/2,n/2,s/2)$ is non-trivial then it is non-cyclic so $Q$, and hence $H(r,n,s)$, is not a 2-generator group, a contradiction. Therefore $H(r/2,n/2,s/2)=1$ and it follows from~(\ref{eq:amalgfp}) that $H(r,n,s)=\pres{x_{2j}}{x_{2j}=x_{2j+2}\ (0\leq j<N)}\cong \Z$, a contradiction.
\end{proof}

\section{Proof of Theorem~\ref{thm:connLOGclassification} and Corollary~\ref{cor:2genknot}}\label{sec:mainthm}

First observe the following:

\begin{lemma}\label{lem:Zgroup}
Let $r,s\geq 1$, $n\geq 2$ and suppose $r\equiv 0$~mod~$n$ or $s\equiv 0$~mod~$n$. Then
\[ H(r,n,s)\cong \Z_{|r-s|/(n,r-s)}*\underbrace{\Z*\ldots *\Z}_{(n,r-s)-1}.\]
Hence $H(r,n,s)$ is a connected LOG group if and only $(r,n,s)=2$ and $|r-s|=2$, in which case $H(r,n,s)\cong \Z$.
\end{lemma}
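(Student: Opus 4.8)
The plan is to prove the displayed isomorphism directly by Tietze transformations and then deduce the connected LOG statement by abelianizing. Since $H(r,n,s)\cong H(s,n,r)$, I would assume without loss of generality that $s\equiv 0$~mod~$n$, say $s=kn$ with $k\geq 1$. Write $P_m=x_mx_{m+1}\cdots x_{m+n-1}$ for the product of all $n$ generators starting at index $m$, and $u_i=x_ix_{i+1}\cdots x_{i+r-1}$. Because subscripts are read mod~$n$, the right-hand side $\prod_{j=0}^{s-1}x_{i+j+r}$ collapses to $P_{i+r}^{\,k}$, so the $i$-th defining relation reads $u_i=P_{i+r}^{\,k}$.

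The crux --- and the step I expect to be the main obstacle --- is to show that $x_i=x_{i+r}$ holds in $H$ for every $i$. For this I would combine two identities in the free group: $P_{m+1}=x_m^{-1}P_mx_m$ (whence $P_{m+1}^{\,k}=x_m^{-1}P_m^{\,k}x_m$), and $u_{i+1}=x_i^{-1}u_ix_{i+r}$. Applying the first with $m=i+r$ and then substituting the $i$-th relation $P_{i+r}^{\,k}=u_i$, the $(i+1)$-th relation $u_{i+1}=P_{i+r+1}^{\,k}$ becomes $u_{i+1}=x_{i+r}^{-1}u_ix_{i+r}$; comparing this with the identity $u_{i+1}=x_i^{-1}u_ix_{i+r}$ and cancelling yields $x_i=x_{i+r}$. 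These $n$ relations are therefore consequences of the defining relators and may be adjoined freely.

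The remainder is mechanical. Adjoining $x_i=x_{i+r}$ forces $x_i=x_j$ whenever $i\equiv j$~mod~$d$, where $d=(n,r)=(n,r-s)$ (the last equality because $n\mid s$); eliminating all but $x_0,\ldots,x_{d-1}$ leaves a free group on these generators. Since $d\mid r$ and $d\mid s$, each relator $u_i=P_{i+r}^{\,k}$ then reduces to $y_i^{(r-s)/d}=1$, where $y_i=x_ix_{i+1}\cdots x_{i+d-1}$ with subscripts mod~$d$. The identity $y_{i+1}=x_i^{-1}y_ix_i$ shows that the $y_i$ are mutually conjugate, so every relation $y_i^{(r-s)/d}=1$ follows from $y_0^{\,|r-s|/d}=1$ and may be discarded. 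Replacing the generator $x_{d-1}$ by $z=y_0=x_0\cdots x_{d-1}$ then gives $H\cong\pres{x_0,\ldots,x_{d-2},z}{z^{\,|r-s|/d}}\cong\Z_{|r-s|/d}*\Z^{*(d-1)}$, which is the asserted free product (the case $r=s$ being covered by the convention $\Z_0\cong\Z$).

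For the final assertion, abelianizing gives $H^{\mathrm{ab}}\cong\Z_{|r-s|/d}\oplus\Z^{d-1}$. Since the abelianization of a connected LOG group is $\Z$, I would note that $\Z_{|r-s|/d}\oplus\Z^{d-1}\cong\Z$ forces $|r-s|/d=1$ and $d=2$ (every other case yields nontrivial torsion or torsion-free rank $\neq 1$); conversely $|r-s|=d=2$ gives $H\cong\Z_1*\Z\cong\Z$, which is a connected LOG group. Finally, since $n\mid s$ we have $(r,n,s)=((r,n),s)=d$, so the conditions ``$d=2$, $|r-s|=2$'' and ``$(r,n,s)=2$, $|r-s|=2$'' coincide, completing the proof.
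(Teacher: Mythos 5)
Your proposal is correct; every step checks (the conjugation identities, the reductions $u_i=y_i^{r/d}$ and $P_{i+r}^k=y_i^{s/d}$ after collapsing the generators, and the edge cases $r=s$ and $d=1$), but the middle of your argument runs along a genuinely different track from the paper's. The paper normalizes the opposite way: using $H(r,n,s)\cong H(s,n,r)$ it assumes $r\equiv 0$~mod~$n$, so that the defining relation becomes $x_i\cdots x_{i+r-1}=x_i\cdots x_{i+s-1}$ with a common prefix on both sides; cancelling gives at once that $H(r,n,s)\cong G_n(x_0x_1\cdots x_{|r-s|-1})$, and the free product decomposition $\Z_{|r-s|/(n,r-s)}*\Z*\cdots*\Z$ is then simply quoted from Theorem~C of \cite{WilliamsSemigroups} rather than proved. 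Your normalization $s\equiv 0$~mod~$n$ hides that common prefix, which is exactly why you need the conjugation trick with $P_{m+1}=x_m^{-1}P_mx_m$ and $u_{i+1}=x_i^{-1}u_ix_{i+r}$ to extract $x_i=x_{i+r}$; under the paper's normalization the analogous relation $x_i=x_{i+|r-s|}$ drops out in one line by comparing the consecutive relators $x_i\cdots x_{i+|r-s|-1}=1$ and $x_{i+1}\cdots x_{i+|r-s|}=1$. From that point on, your explicit Tietze reduction to $\pres{x_0,\ldots,x_{d-2},z}{z^{|r-s|/d}}$ is in effect an inline proof of the special case of the cited Theorem~C that the paper outsources. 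The trade-off: the paper's route is shorter and situates the lemma within the known structure theory of the cyclically presented groups $G_n(x_0\cdots x_{m-1})$; yours is self-contained, needs no external citation, and exhibits the isomorphism completely explicitly. The final step is identical in both treatments: abelianize, use that connected LOG groups abelianize to $\Z$ (and that $\Z$ itself is a connected LOG group for the converse), and note that $(n,r-s)=(r,n,s)$ under the divisibility hypothesis.
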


\begin{proof}
Since $H(r,n,s)\cong H(s,n,r)$ we may assume that $r\equiv 0$~mod~$n$. Then the set of relations of $H(r,n,s)$ are $x_i\ldots x_{i+r-1}=x_i\ldots x_{i+s-1}$ ($0\leq i<n$) which, after cancelling, become the set of relations $x_i\ldots x_{i+|r-s|-1}=1$. That is, $H(r,n,s)\cong G_n(x_0x_1\ldots x_{|r-s|-1})$ which by~\cite[Theorem~C]{WilliamsSemigroups} is isomorphic to $$\Z_{|r-s|/(n,r-s)}*\underbrace{\Z*\ldots *\Z}_{(n,r-s)-1}.$$
If $H(r,n,s)$ is a connected LOG group then $H(r,n,s)^\mathrm{ab}\cong \Z$ which happens if and only if $(n,r-s)=2$ and $|r-s|=2$ (equivalently $(r,n,s)=2$ and $|r-s|=2$, since $r \equiv 0$~mod~$n$), in which case $H(r,n,s)\cong \Z$.
\end{proof}

\begin{proof}[Proof of Theorem~\ref{thm:connLOGclassification}]
If $H(r,n,s)$ is a connected LOG group then \linebreak $\beta(H(r,n,s)^\mathrm{ab})=1$ so by Theorem~\ref{thm:betti} we may assume that either $r=s$ and $(r,n)=1$ or $r\neq s$ and $(r,n,s)=2$.

In the first case the result follows from Lemma~\ref{lem:(r,n)=1knot} so assume $r\neq s$ and $(r,n,s)=2$. If we have that $r\equiv 0$~mod~$n$ or $s\equiv 0$~mod~$n$ then the result follows from Lemma~\ref{lem:Zgroup}, so we may assume further that $r\not \equiv 0$~mod~$n$ and $s\not \equiv 0$~mod~$n$. If $|r-s|\neq 2$ then $H(r,n,s)$ is not a connected LOG group by Lemma~\ref{lem:r-s=2easy}, and if $|r-s|=2$ and $H(r/2,n/2,s/2)^\mathrm{ab}\neq 1$ then $H(r,n,s)$ is not a connected LOG group, by Lemma~\ref{lem:r-s=2hard}. Now $r+s\equiv 2$~mod~$4$ so $(r+s)/(n,r+s)$ is odd and by Lemma~\ref{lem:mapstoZ2^t} we may assume $(n,r+s)\leq 2$ so $(n,r+s)=2$. If $\{r,s\}=\{4,2\}$ then $H(r/2,n/2,s/2)\cong F(2,n/2)$ and it is well known (see, for example, \cite{Conway67}) that $F(2,n)^\mathrm{ab}\neq 1$ so we may assume further that $\{r,s\}\neq \{4,2\}$. Thus we have that $r,s\not \equiv 0$~mod~$n$, $(r,n,s)=2$, $|r-s|=2$, $H(r/2,n/2,s/2)^\mathrm{ab}=1$, $(n,r+s)=2$ and $\{r,s\}\neq\{4,2\}$, as in part~(c).
\end{proof}

\begin{proof}[Proof of Corollary~\ref{cor:2genknot}]
If $H=H(r,n,s)$ is a 2-generator knot group then $H$ is a connected LOG group so one of the conclusions (a),(b),(c) of Theorem~\ref{thm:connLOGclassification} hold. In (b) we have that $H(r,n,s)\cong \Z$, a contradiction, and in~(c) $H(r,n,s)$ is not a 2-generator knot group by Lemma~\ref{lem:r-s=2hard}.
\end{proof}

\section*{Acknowledgements}

I thank Bill Bogley for helpful comments on a draft of this article.

\end{document}